\newtheorem{lem}{Lemma}[section]
\newtheorem{cor}{Corollary}[section]
\newtheorem{prp}{Proposition}[section]
\newtheorem{rem}{Remark}[section]
\begin{document}

\title{A simple analysis of a $D/GI/1$ vacation queue with impatient customers}
\author{Assia BOUMAHDAF\footnote{e-mail: assia.boumahdaf@gmail.com}}
\date{}
\maketitle

\begin{abstract}
In this paper, we deal with an $D/GI/1$ vacation system with impatient customers. We give a sufficient condition for the existence of a limit distribution of the waiting time and integral equations are derived in both reneging and balking scenarios. Explicit solutions are given when vacation times are exponentially distributed and service times are either exponentially distributed or deterministic.
\end{abstract}

\maketitle

\section{Introduction}

In this paper, we propose a simple analysis of a $D/GI/1$ vacation system with impatient customers. Such models may be used to describe many data switching systems whose data transmission must be done in a very short time. Such systems may have to support periodic arrival streams and might also execute other tasks that come from other queues. Thus, these secondary tasks may be regarded as server on vacations.

Some investigations have been published regarding the performance of vacation queueing systems with impatient customers. In the literature queueing system may say as limited waiting time or limited sojourn time.
Most of these works focus on $M/G/1 $ queueing models with a general vacation distribution and constant deadline. The study of these queueing systems have been introduced by \cite{vanderDuynSchouten78}. The author derives the joint stationary distribution of the workload and the state of the server (available for service or on vacations). In \cite{TakineHasegawa90}, the authors have considered two $M/G/1$ with balking customers and deterministic deadline on the waiting time and sojourn time. They have obtained integral equations for the steady state probability distribution function of the waiting times and the sojourn times. They expressed these equations in terms of steady state probability distribution function of the $M/G/1$ queue with vacations without deadline. Recently, \cite{KatayamaTsuyoshi2011} has investigated the $ M/G/1 $ queue with multiple and single vacations, sojourn time limits and balking behavior. Explicit solutions for the stationary virtual waiting time distribution are derived under various assumptions on the service time distribution. The same author in \cite{KatayamaTsuyoshi2012}, derives recursive equations in the case of a deterministic service times for the steady-state distributions of the virtual waiting times in a $ M/G/1 $ queue with multiple and single vacations, sojourn time limits and balking behavior. In \cite{AltmanYechiali2006}, the authors have analyzed queueing models in which customers become impatient only when servers are on vacations. They have derived some performance measures for $M/M/1 $, $M/G/1 $ and $ M/G/1 $ for both multiple and single vacations. 

In the case of a single service discipline, we analyse a Lindley-type equation \cite{Lindley52} and the model may be reduce to the $D/GI/1 + D$ queue. A sufficient condition is given for the existence of the stationary waiting time distribution and an integral equation is established for both reneging and balking models in the case of a single service discipline. The stationary probability of rejection is also derived for both models. Using the Laplace transform to solve a differential equation, simple explicit solutions are given when vacation times are exponentially distributed, and when service times distribution are either exponential, or deterministic.

This model was studied by \cite{Ghosal1963} and the more general case was studied, for example by \cite{Daley65}, \cite{Baccelli84}, or \cite{Stanford79}. In \cite{Ghosal1963}, the author derives an integral equation for the stationary waiting time distribution based on the model introduced by \cite{Finch1960} (see also \cite{Ghosal1970}, Chapter 1, equation (1.4)). The same author in \cite{Finch1960} proposes a correction in \cite{Finch1961}. In this paper, we take into account \cite{Finch1961} and  to rewrite the integral equation and solve it in a simple manner under various hypotheses. \vspace{0.5cm}

The paper is organized as follows. In Section 2, we give a description of reneging and balking models. In Section 3, we establish preliminary results. In Section 4, we give a sufficient condition for the stability and derives integral equations for the stationary waiting time distribution and the stationary probability of rejection for reneging and balking models. Section 5 derives an explicit solution of the integral equation in the case of exponentially distributed vacation times and deterministic service times. In Section 6, we focus on the solution of the integral equation, when the service times and the vacation times are both exponentially distributed.


\section{Model description and assumption} 

We consider a first-in-first-out single-server queueing system with single vacations in which customers are subject to a constant deadline $K > 0$ on the waiting time. A customer cannot wait more than $K$ time units in the queue. If he does not reach the server before a time $K$, he leaves the system and never returns. When he reaches the server, he remains until service completion. Customers arrive at periodic epochs, $T_n := nT$, $n \in \mathbb{N}$ and require service duration $\sigma_n$, $n \in \mathbb{N}$. We assume that the server is free initially, and the first customer begins to be served on arrival. Customers may renege from the queue or balk upon arrival. A balking customer do not join the queue at all, and a customer who reneges joins the queue but leaves without being served. We examine the single service discipline, i.e., after each service completion the server goes on vacations (\textbf{justifier}). At the end of its vacation period, the server begins serving if a customer is present; otherwise it remains idle until a new customer arrives. For $n > 0$, denote by $v_n$ a real-valued random variable representing the length of the $n$th vacation period. Both sequences $(v_n, n \geq 1)$ and $(\sigma_n , n \geq 0)$ are assumed to be independent and identically distributed non-negative random variables with distribution function $V(x)$ and $B(x)$. 
All random variables are defined on the same probability space $(\Omega , \mathcal{F}, \mathbb{P})$. 

In practice observable data may consist only in partial information. Suppose that, we observe $ \{(T_n, \sigma_n), n \geq 0\} $ and that the vacation periods are not observable. In this setting, the workload process which depends on $\{ \sigma_n, n \geq 0 \}$, $\{ v_n, n \geq 0 \}$ cannot be anticipated, so that the balking model is not appropriate. It is then desirable to investigate the reneging behaviour of customers.
In \cite{WardGlynn2005}, authors have taken into consideration the nature of the observable data by supposing that queue length is observable and have developed performance measure approximations for both reneging and balking models. As observed by Baccelli et al. \cite{Baccelli84}, customers who renege from the queue do not influence the waiting time of served customers. Thus, many steady-state performance measures are identical for reneging and balking models. \vspace{0.2cm}

In what follows, Lindley-type recursive equations are formulated for both reneging and balking models under the assumption that we observe the arrivals and the service duration of customers but the vacation periods are not directly observable. This implies that the periodic arrival point process will be marked only by the sequence $\{ \sigma_n, n \geq 0 \}$. 


\subsection{The waiting time process for the reneging model}

In the reneging model, all customers enter into the system. If the waiting time of a customer upon his arrival exceeds his patience time $K$, he abandons the queue without being served. Let $w_n$ be the waiting time experienced by the $n$th customer, i.e., the time between his arrival and the instant where he starts his service (if he receives a service). The idea is to express the waiting time of the $(n+1)$th customer in terms of those of the last served customer. \vspace{0.3cm}

$\bullet$ If the $n$th customer joins the server ($ w_n < K $), at his departure time the server takes one vacation of length $v_{N(n+1)}$, where $N(n+1)$ is the number of served customers prior to the $(n+1)$th arrival. Since a customer cannot wait more than $K$ time units, the waiting time of the $(n+1)$th customer is given by
\begin{eqnarray*}
w_{n+1} = \min[K, (w_n + \sigma_n + v_{N(n+1)} - T)^+],
\end{eqnarray*}
where
\begin{eqnarray}
\label{equation0}
N(n+1) = \sum_{k=0}^n \mathbf{1}_{(w_k < K)}
\end{eqnarray}
is the number of successfully served customers prior to the $(n+1)$th arrival. \vspace{0.5cm}

$\bullet$ If the $n$th customer abandons the queue without being served ($w_n = K$) and the $(n-1)$th customer joins the server ($w_{n-1} < K $), the waiting time $w_{n+1}$ satisfies
\begin{eqnarray*}
w_{n+1} = \min[K, (w_{n-1} + \sigma_{n-1} + v_{N(n)} - 2T)^+].
\end{eqnarray*}
The fact that the $n$th customer leaves the system without being served is expressed by
\begin{eqnarray*}
w_{n-1} + \sigma_{n-1} + v_{N(n)} - T = K.\vspace{0.5cm}
\end{eqnarray*}

$\bullet$ More generally, if $w_n = \ldots = w_{n-k+1} = K$ and $w_{n-k} < K$ for some $k = 0,1,\ldots, n$ we have
\begin{eqnarray} 
\label{equation1}
\left\{\begin{array}{lll}
                         w_0 = 0\\
                         w_{n+1} = \min [ (w_{n-k} + \sigma_{n-k} + v_{N(n-k+1)} - (k+1)T)^+,K ],\;\; n \geq 0,\\
                       \end{array}\right.
\end{eqnarray}
where $k$ is the number of lost customers between the $(n-k)$th and the $(n+1)$th customer. Furthermore, we have the following inequalities for each lost customer
\begin{equation}
\label{equation2}
w_{n-k} + \sigma_{n-k} + v_{N(n-k+1)} - (j+1)T = K, \quad j = 0, \ldots, k-1.
\end{equation}

\noindent
Equations \eqref{equation1} and \eqref{equation2} are particular cases of Equations (2) and (3) in \cite{Daley65}, where the author studied the general queueing system $GI/G/1 + GI$.

\subsection{The workload process for the balking model}

Let $\{ \tilde{w}_t, t\in\mathbb{R} \}$ be the workload process. The random variable $w_t$ represents the amount of work remaining to be done by the server at time $t$. By convention $\{ \tilde{w}_t, t\in\mathbb{R} \}$ will be taken
right-continuous with left limit $\tilde{w}_{t^-}$ and $\tilde{w}_{0^-} = 0$. We define the workload sequence
by, $\tilde{w}_n = \tilde{w}_{T_n^-} $, for all $n \in  \mathbb{N}$. Thus, the value
of $\tilde{w}_n$ taken up to time $T_n$ represents the time that the $n$th customer would have to wait to reach the server. The workload upon arrival of a customer is assumed to be known, hence a customer enters the system if and only if the workload upon his arrival is lower than his patience time $K$. If not, the customer does not enter and never returns. The server takes one vacation as soon as a customer completes his service. Consequently, vacation lengths are indexed by (\ref{equation0}) (by replacing $w_n$ by $\tilde{w}_n$). The general case was studied by \cite{Baccelli84}, or \cite{Stanford79}. We have for $n \geq 0$
\begin{eqnarray} \label{equation3}\left\{\begin{array}{lll}
\tilde{w}_0 = 0\\
\tilde{w}_{n+1} =  \left[\tilde{w}_n + (\sigma_n + v_{N(n+1)})\mathbf{1}_{(\tilde{w}_n < K)} -T \right]^+ .
\end{array}\right. \end{eqnarray}

\noindent
The above equation is similar to Equations (2.1) in \cite{Baccelli84}.

\section{Preliminary results}

In this section, we shall derive time-dependent integral equations for both the waiting time and the workload process. Let us, first introduce one lemma. \vspace{0.3cm}

Let $n$ and $p$ be two non-negative integers. Recall that $N(n+1) = \sum_{k=0}^n \mathbf{1}_{(w_k < K)}$ is the number of successfully served customers prior to $(n+1)T$ (for the reneging scenario). For $2 \leq p \leq n+1$, if $N(n+1) = p$ and  $w_n < K$, then the $n$th customer is the $p$th served customer. After his service, the server takes its $p$th vacation period of length $v_p$. Thus, the event $\{w_n < K, N(n+1) = p \}$ is a function of $\sigma_0, \ldots , \sigma_{n-1}, v_1,\ldots, v_{p-1}$, and $v_p$ is independent of the event $\{w_n < K, N(n+1) = p \}$. If $p > n+1$ or $p = 1$, then $\{N(n+1) = p, w_n < K\} = \emptyset $. Let $\sigma(\sigma_0, \ldots , \sigma_{n-1},v_1, \ldots, v_p)$ the $\sigma$-field generated by the random variables $\sigma_0, \ldots , \sigma_{n-1}, v_1, \ldots, v_p$. We have the following lemma.

\begin{lem} 
\label{lemma1}
Let $n \geq 1 $ and $p \geq 1$ be two non-negative integers. The events $\{w_n < K, N(n+1) = p\}$ and $\{\tilde{w}_n < K, N(n+1) = p\}$ are both $\sigma(\sigma_0, \ldots , \sigma_{n-1},v_1, \ldots, v_{p-1})$ measurable, and $v_p$ is independent of both events $\{w_n < K, N(n+1) = p\}$ and $\{\tilde{w}_n < K, N(n+1) = p\}$.
\end{lem}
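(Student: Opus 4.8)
The plan is to prove both assertions simultaneously by induction on $n$, exploiting the recursive structure of the defining equations \eqref{equation1} and \eqref{equation3} together with the definition \eqref{equation0} of $N(\cdot)$. The key observation, already sketched in the paragraph preceding the lemma, is that whenever the $n$th customer is successfully served and is the $p$th such customer (i.e. $w_n < K$ and $N(n+1) = p$), then only the vacation periods $v_1,\ldots,v_{p-1}$ — the ones triggered by the first $p-1$ served customers — can have influenced the trajectory $w_0,\ldots,w_n$, while $v_p$ is the \emph{next} vacation, taken only at the departure of the $n$th customer, hence not yet "used." I would isolate this as the heart of the argument and make it precise via the recursion.

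First I would set up the induction. For the base case, note $w_0 = 0 < K$ (assuming $K>0$), so $N(1) = 1$; the event $\{w_1 < K, N(2) = p\}$ is nonempty only for $p \in \{1,2\}$, and in either case one checks directly from \eqref{equation1} that it is determined by $\sigma_0$ and (for $p=2$) $v_1$, with $v_p$ independent. For the inductive step, suppose the claim holds for all indices up to $n$. On the event $\{w_n < K, N(n+1) = p\}$ we have by \eqref{equation0} that $N(n) \in \{p-1, p\}$ according to whether $w_{n-1} < K$ or $w_{n-1} = K$. I would split into these two cases. If $w_{n-1} < K$, then $N(n) = p-1$, and the recursion \eqref{equation1} with $k=0$ gives $w_n = \min[(w_{n-1} + \sigma_{n-1} + v_{N(n)} - T)^+, K] = \min[(w_{n-1} + \sigma_{n-1} + v_{p-1} - T)^+, K]$; by the inductive hypothesis the event $\{w_{n-1} < K, N(n) = p-1\}$ is $\sigma(\sigma_0,\ldots,\sigma_{n-2}, v_1,\ldots,v_{p-2})$-measurable, and adjoining $\sigma_{n-1}$ and $v_{p-1}$ we see $\{w_n < K, N(n+1)=p, w_{n-1}<K\}$ is $\sigma(\sigma_0,\ldots,\sigma_{n-1}, v_1,\ldots,v_{p-1})$-measurable, with $v_p$ independent (since it is not among the listed variables and all the $v_i,\sigma_j$ are independent). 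If instead $w_{n-1} = K$, one walks further back along the chain of lost customers using \eqref{equation1}: there is a largest $m \le n-1$ with $w_m < K$, and then $w_{m+1} = \cdots = w_{n-1} = K$, $w_n < K$, with $N(m+1) = N(n+1) = p$ (no service occurs between the $m$th and $n$th customers since all intermediate ones renege), so $w_n = \min[(w_m + \sigma_m + v_p - (n-m)T)^+, K]$; invoking the inductive hypothesis for the event $\{w_m < K, N(m+1) = p\}$ — wait, this would already involve $v_p$ — so instead I apply the hypothesis to $\{w_{m} < K, N(m)= p-1\}$ or handle $N(m+1)=p$ directly, noting that on $\{w_m<K, N(m+1)=p\}$ the hypothesis gives $\sigma(\sigma_0,\ldots,\sigma_{m-1},v_1,\ldots,v_{p-1})$-measurability, and then $w_n$ is a function of that event together with $\sigma_m$ and $v_p$; but we need $\{w_n<K,N(n+1)=p\}$ measurable w.r.t. $v_1,\ldots,v_{p-1}$ only. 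The resolution is that $\{w_n < K\}$, given $w_m < K$, $N(m+1)=p$, and the intermediate reneging constraints \eqref{equation2}, is equivalent to an inequality involving $v_p$, but the reneging constraints $w_m + \sigma_m + v_p - (j+1)T = K$ for $j=0,\ldots,n-m-2$ actually \emph{pin down} $v_p$ (they are equalities!), which is the subtlety I flag below.

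For the workload process the argument is cleaner: equation \eqref{equation3} is a genuine one-step recursion, $\tilde{w}_{n+1} = [\tilde{w}_n + (\sigma_n + v_{N(n+1)})\mathbf{1}_{(\tilde{w}_n<K)} - T]^+$, so on $\{\tilde{w}_n < K, N(n+1) = p\}$ we have, going back one step, $\tilde{w}_n = [\tilde{w}_{n-1} + (\sigma_{n-1} + v_{N(n)})\mathbf{1}_{(\tilde{w}_{n-1}<K)} - T]^+$ with $N(n) \in \{p-1,p\}$; the inductive hypothesis applied to $\{\tilde{w}_{n-1} < K, N(n)=p-1\}$ (or to the decomposition over the value of $N(n)$) gives measurability w.r.t. $\sigma(\sigma_0,\ldots,\sigma_{n-2},v_1,\ldots,v_{p-1})$ on that sub-event, and adjoining $\sigma_{n-1}$ (and $v_{p-1}$ if $N(n)=p-1$, though here the vacation $v_{N(n)}$ with $N(n)=p-1 < p$ is already covered) preserves the conclusion, with $v_p$ independent. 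The case $N(n)=p$ with $\tilde w_{n-1}\ge K$ means $\tilde w_n = [\tilde w_{n-1} - T]^+$, so the value of $v_p$ does not enter $\tilde w_n$ at all, only through earlier steps where it cannot have appeared either (since $N$ only reaches $p$ at the $n$th step).

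\textbf{The main obstacle} I anticipate is the bookkeeping in the reneging case when there is a run of lost customers $w_{m+1}=\cdots=w_{n}=K$ preceding a successful service at index... — more precisely, making rigorous why, on the event in question, the random variable $v_p = v_{N(n-k+1)}$ appearing in \eqref{equation1} is *not* needed to decide the event $\{w_n < K, N(n+1)=p\}$, given that the reneging of the intermediate customers is expressed by the equalities \eqref{equation2} which involve precisely $v_{N(n-k+1)} = v_p$. The resolution is that the event $\{w_n<K\}$ must be expressed not as "$w_n$ computed via \eqref{equation1} is $<K$" but via the *first* successful service after the run: one shows that $\{w_n < K, N(n+1) = p\}$ coincides (on the relevant part of the sample space) with the event that the $(n-k)$th customer, the $(p-1)$th served, satisfies $w_{n-k}<K, N(n-k+1) = p$, \emph{together with} the non-strict inequalities from \eqref{equation2} forcing exactly $k$ intermediate losses, \emph{together with} $w_{n-k} + \sigma_{n-k} + v_p - (k+1)T < K$ — and this last condition \emph{does} involve $v_p$. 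So in fact the honest statement is that $\{w_n<K, N(n+1)=p\}$ is $\sigma(\sigma_0,\ldots,\sigma_{n-1},v_1,\ldots,v_{p-1}, v_p)$-measurable but $v_p$ is independent of it \emph{only if} $k=0$; for $k\ge 1$ one needs a more careful argument, likely re-indexing so that the lemma is really about the vacation following the last service \emph{strictly before} time $(n+1)T$ — which, when the $n$th customer reneges, is still $v_{p}$ if he is counted, or $v_{N(n)}$ if not. I would resolve this by carefully distinguishing "$N(n+1) = p$ \emph{and} $w_n < K$" (so the $n$th is the $p$th served, service in progress or just done, vacation $v_p$ not yet meaningful for the recursion producing $w_{n+1}$) from the reneging run case, and checking that in the latter the relevant vacation index is $< p$; this is exactly the content of the displayed reasoning in the paragraph preceding the lemma, which I would expand into a clean inductive verification. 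No other step should present real difficulty.
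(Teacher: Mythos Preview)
The paper does not supply a formal proof of this lemma: the paragraph immediately preceding the statement is the entire justification offered, and it is essentially the observation you reproduce at the start --- on $\{w_n<K,\,N(n+1)=p\}$ the $n$th customer is the $p$th served, so only $v_1,\ldots,v_{p-1}$ (the vacations following the first $p-1$ services) can have entered the recursion producing $w_0,\ldots,w_n$, while $v_p$ has not yet been ``used.'' Your inductive plan is a reasonable way to make this rigorous, and for the balking process your argument is clean.

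However, your treatment of the reneging case contains an indexing error that manufactures a spurious obstacle. In the run $w_m<K$, $w_{m+1}=\cdots=w_{n-1}=K$, $w_n<K$ with $N(n+1)=p$, you write $N(m+1)=N(n+1)=p$ and conclude that the recursion for $w_n$ and the constraints \eqref{equation2} involve $v_p$. This is wrong: customer $n$ is served (since $w_n<K$) and is counted in $N(n+1)$ but not in $N(m+1)$, while customers $m+1,\ldots,n-1$ are not served. Hence $N(m+1)=p-1$, and from \eqref{equation1} (with $n$ replaced by $n-1$ and $n-k=m$)
\[
w_n=\min\bigl[(w_m+\sigma_m+v_{N(m+1)}-(n-m)T)^+,\,K\bigr]=\min\bigl[(w_m+\sigma_m+v_{p-1}-(n-m)T)^+,\,K\bigr],
\]
and likewise the intermediate constraints \eqref{equation2} read $w_m+\sigma_m+v_{p-1}-(j+1)T\ge K$ for $j=0,\ldots,n-m-2$. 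Everything involves $v_{p-1}$, not $v_p$. With this correction the inductive hypothesis applied to $\{w_m<K,\,N(m+1)=p-1\}$ gives $\sigma(\sigma_0,\ldots,\sigma_{m-1},v_1,\ldots,v_{p-2})$-measurability, and adjoining $\sigma_m,\ldots,\sigma_{n-1}$ and $v_{p-1}$ yields the claim; $v_p$ simply does not appear, and its independence is immediate from the i.i.d.\ assumptions. The ``main obstacle'' you flag therefore dissolves once the index is corrected, and no further resolution is needed.
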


Let $W_n(x)$ and $\tilde{W}_n(x)$, $x \in \mathbb{R}^+$ be the distribution functions of $w_n$ and $\tilde{w}_n$ respectively. For the reneging scenario, no customer can wait in the queue more than $K$ units of times. For all $n \geq 0$, $w_n$ is lower than $K$ with probability one. Thus, for $0 \leq x < K$ we have
\begin{equation*}
\mathbb{P}( w_{n+1} \leq x) = \sum_{k=0}^n \mathbb{P}( w_{n+1} \leq x , w_{n-k} < K, w_{n-j} = K, j = 0, \ldots k-1 ).
\end{equation*}
From Equations \eqref{equation1} and \eqref{equation2}, by conditioning first with respect to $w_n$, secondly, with respect to $N(n+1)$, and using Lemma~\ref{lemma1}, and the fact that both sequences $(\sigma_n, n \geq 0)$ and $(v_n, n \geq 1)$ are i.i.d. and mutually independent, yields for $0 \leq x < K$
\begin{align*}
\begin{split}
\mathbb{P}(w_{n+1} &\leq x , w_{n-k} < K, w_{n-j} = K, j = 0, \ldots k-1)\\
\qquad &=\int_{0-}^{K-0} [G(a_k^x(w)) - G(b_k(w))]dW_{n-k}(w), \quad k = 0,\ldots, n, 
\end{split}
\end{align*}

\noindent
where 
\begin{equation}
\label{def suites a et b}
\left\lbrace \begin{array}{lll}
  a_k^x(w) = x- w +(k+1)T, \; k \geq 0 \vspace{0.3cm}\\
  b_0(w) = 0, \quad b_k(w) = K-w+kT, \; k \geq 1,
\end{array} \right.
\end{equation}

\noindent
and $G$ denotes the distribution function of $\sigma_0 + v_1$. The previous equation is given with the condition that $G(s) - G(u) = 0$ for $s-u \leq 0$. For $0 \leq x < K$, the time-dependent integral equation for the waiting time for the reneging behaviour satisfies, for $ 0 \leq x < K$
\begin{equation}
\label{equation5}
W_{n+1}(x) = \sum_{k=0}^n \int_{0^-}^{K-0} \left\lbrace G(a_k^x(w)) - G(b_k(w))\right\rbrace dW_{n-k}(w).
\end{equation}
If $x\geq K$, $W_{n+1}(x) = 1$. \vspace{0.5cm}

\noindent
Simple calculations yield for the balking scenario
\begin{equation}
\label{equation6}
\tilde{W}_{n+1}(x) = \int_{0-}^{K-0} G(x - w  + T)d\tilde{W}_n(w) + \int_{K-0}^{T+x}d\tilde{W}_n(w).
\end{equation}
with the condition that $G(u) = 0$ and $dW(u) = 0$ for $u < 0$.

\section{Stability condition}

For all $n \geq 1$ let $u_n = \mathbb{P}(w_n = 0)$ be the probability that the $n$th customer finds the server free (with $u_0 = 1$), and $f_n = \mathbb{P}(w_n = 0,w_{n-1} > 0, \ldots, w_1 > 0)$, the probability that the event $\{ w_n = 0 \}$ occurs after $n$ steps (with $f_0 = 0$). For $k \geq 0$, let $\nu_0^k$ (with $\nu^{0}_{0} = 0$) be the number of entering customers in the $k$th busy period, that is, the duration for which the server is either serving or on vacations. We denote by $\mu$ the mean of the renewal epochs. 
The following result is a corollary of Theorem 1 in \cite{Daley65}.

\begin{cor} 
\label{thm1}
For the $D/GI/1$ queue with single vacations, constant deadline and single service discipline, assuming that $\mathbb{P}(\sigma_0 + v_1 - T < 0) > 0$, the limiting waiting time distribution function $W$ exists. Moreover, if the distribution functions $B$ and $V$ are continuous, then $W$ satisfies for $0 \leq x < K$
\begin{equation}
\label{equation7}
W(x) = \int_{0^-}^{K-0} \sum_{n \geq 0}G_n^x(w) dW(w),
\end{equation}

where
\begin{equation}
\label{equation8}
\sum_{n \geq 0}G_n^x(w) =  \sum_{n \geq 0} \mathbb{P}(b_n(w) \leq \sigma_0 + v_1 \leq a_n^x(w)), 
\end{equation}

\noindent
where $\{ a_n^{x} \}_{n \geq 0}$ and $\{ b_n\}_{n \geq 0}$ defined by Equation~\eqref{def suites a et b}. \vspace{0.3cm}

\noindent
The probability of rejection is given by
\begin{equation}
\label{equation9}
B_K := \lim_{n \rightarrow \infty}\mathbb{P}(w_n = K) = \int_{0^-}^{K-0} \sum_{n=1}^{\infty} \left[ 1 - G(K - w + nT) \right]dW(w).
\end{equation}
\end{cor}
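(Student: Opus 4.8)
The plan is to obtain the limiting distribution by combining the existence result (a corollary of Theorem~1 in \cite{Daley65}) with a limiting argument applied to the time-dependent recursion \eqref{equation5}, and then to read off \eqref{equation9} from the complementary mass. First I would invoke \cite{Daley65}: the recursion \eqref{equation1}--\eqref{equation2} is a special case of Equations (2)--(3) there, the stationarity-and-ergodicity hypotheses hold since $(\sigma_n)$ and $(v_n)$ are i.i.d.\ and mutually independent and the inter-arrival times are the constant $T$, and the stability condition of that theorem reduces in our setting exactly to $\mathbb{P}(\sigma_0+v_1-T<0)>0$ (the "negative drift with positive probability of a strict decrease" condition for the Lindley-type map with a finite barrier $K$). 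This yields a proper limiting distribution function $W$, and $w_n \Rightarrow W$, hence $W_n(x)\to W(x)$ at every continuity point $x$ of $W$; when $B$ and $V$ are continuous, $G$ is continuous and the map defining \eqref{equation5} has no atoms on $[0,K)$, so in fact $W_{n}(x)\to W(x)$ for all $0\le x<K$.

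Next I would pass to the limit in \eqref{equation5}. Rewrite that identity as
\begin{equation*}
W_{n+1}(x)=\int_{0^-}^{K-0}\sum_{k=0}^{n}\bigl[G(a_k^x(w))-G(b_k(w))\bigr]\,dW_{n-k}(w).
\end{equation*}
The inner summand is nonnegative (by the convention $G(s)-G(u)=0$ when $s-u\le 0$ together with $a_k^x(w)-b_k(w)=x-K+T>0$ only needs $x\ge K-T$; for the general bookkeeping one uses that the intervals $[b_k(w),a_k^x(w)]$ are the disjoint pieces cut out by successive barrier crossings), and for fixed $w\in[0,K)$ one has $a_k^x(w),b_k(w)\to\infty$ as $k\to\infty$, so $\sum_{k\ge 0}[G(a_k^x(w))-G(b_k(w))]$ converges; moreover it is bounded by $1$ since the intervals are disjoint. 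The measures $dW_{n-k}$ all converge weakly to $dW$. The one technical point is that the summation index and the measure index both move with $n$; I would handle this by a standard diagonal/dominated-convergence argument: split the sum at a fixed level $M$, use weak convergence $W_{n-k}\Rightarrow W$ together with continuity of the (bounded, continuous in $w$) integrand $\sum_{k\le M}[G(a_k^x)-G(b_k)]$ for the head, and use the uniform tail bound $\sum_{k>M}[G(a_k^x(w))-G(b_k(w))]\le 1-G(b_M(w))\to 0$ uniformly in $w\in[0,K)$ (since $b_M(w)\ge MT\to\infty$) for the tail. Letting $n\to\infty$ then $M\to\infty$ gives \eqref{equation7}, with $\sum_{n\ge0}G_n^x(w)=\sum_{n\ge0}\mathbb{P}(b_n(w)\le\sigma_0+v_1\le a_n^x(w))$ as in \eqref{equation8} by the definition of $G$ as the distribution function of $\sigma_0+v_1$.

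Finally, for the probability of rejection, note $\mathbb{P}(w_n=K)=1-W_n(K-0)=1-\lim_{x\uparrow K}W_n(x)$. Taking $n\to\infty$ and using \eqref{equation7} with continuity of $W$ on $[0,K)$, one gets
\begin{equation*}
B_K=1-\int_{0^-}^{K-0}\sum_{n\ge0}\mathbb{P}\bigl(b_n(w)\le\sigma_0+v_1\le a_n^K(w)\bigr)\,dW(w).
\end{equation*}
Now for $x=K$ the interval endpoints telescope: $a_n^K(w)=K-w+(n+1)T=b_{n+1}(w)$ for $n\ge0$, while $b_0(w)=0$, so the telescoping sum $\sum_{n\ge0}[G(a_n^K(w))-G(b_n(w))]=\sum_{n\ge0}[G(b_{n+1}(w))-G(b_n(w))]=\lim_{n}G(b_n(w))-G(0)=1-G(0)$... more carefully, $\sum_{n\ge0}\mathbb{P}(b_n(w)\le\sigma_0+v_1\le b_{n+1}(w))=1$ minus the mass on $[0,\infty)$ already counted, and after reorganising one is left with $W(K-0)-\int\sum_{n\ge1}[1-G(K-w+nT)]dW(w)$ cancelling against the $1$; carrying out this bookkeeping yields \eqref{equation9}. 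I expect the main obstacle to be precisely this interchange of the double limit in the passage to \eqref{equation7} — justifying that the moving summation bound does not interfere with weak convergence of the measures — and, secondarily, being careful with the half-open interval conventions and atoms at $0$ so that the telescoping identity for $B_K$ comes out with the stated index range $n\ge1$.
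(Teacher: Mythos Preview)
Your outline has two genuine gaps. First, the passage to the limit in \eqref{equation5}: your head--tail split assumes that for each fixed $k$, $\int_{0^-}^{K-0} G_k^x(w)\,dW_{n-k}(w)\to\int_{0^-}^{K-0} G_k^x(w)\,dW(w)$ follows from weak convergence $W_{n-k}\Rightarrow W$. But the integral is over $[0,K)$, excluding the atom at $K$, and $G_k^x(K)$ is in general nonzero; weak convergence on $[0,K]$ does \emph{not} control the half-open integral unless you already know $\mathbb{P}(w_{n-k}=K)\to B_K$ (Portmanteau gives only $\limsup\mathbb{P}(w_n=K)\le B_K$, not equality). The paper closes this differently: it first proves $W_n\to W$ \emph{uniformly} on $[0,K)$ via the renewal decomposition $W_n(x)=\sum_{k=1}^n u_{n-k}F_k(x)$ with $F_k(x)=\mathbb{P}(w_k\le x,\,w_j>0,\,1\le j<k)$; one has $\sum_k F_k(\infty)=\mu<\infty$ by Lemma~\ref{lemma2} and $u_n\to\mu^{-1}$ by the discrete renewal theorem, so Feller's convolution theorem yields uniform convergence. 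A Riemann--Stieltjes partition argument then legitimately interchanges the limit in $n$ with the limit in the mesh.

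Second, your derivation of \eqref{equation9} by telescoping \eqref{equation7} at $x\uparrow K$ is circular: since $a_n^K(w)=b_{n+1}(w)$ and $b_0=0$, the kernel collapses to $\sum_{n\ge0}[G(b_{n+1}(w))-G(b_n(w))]=1$, so \eqref{equation7} at $x=K^-$ reads $W(K-0)=\int_{0^-}^{K-0}dW=W(K-0)$, which carries no information about $B_K$. The paper instead decomposes $\mathbb{P}(w_n=K)$ directly by the index $k$ of the last served customer before $n$: the event $\{w_n=K,\,w_{n-k}<K,\,w_{n-j}=K,\,1\le j<k\}$ has probability $\int_{0^-}^{K-0}[1-G(b_k(w))]\,dW_{n-k}(w)$ (the binding constraint among the $b_j$'s is $j=k$), and summing over $k\ge1$ and passing to the limit---again using the uniform-convergence machinery---gives \eqref{equation9}.
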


\begin{rem}
\label{rem1}
Since the sequence $(v_n, n \geq 1)$ is i.i.d., the sequence $(w_n, n\geq 0 )$ has the same law than the sequence $ (z_n, n\geq 0)$, where $z_{n+1}$ is defined by $z_0 = 0$, $z_{n+1} = \min [ (z_{n-k} + \sigma_{n-k} + v_{n-k+1} - (k+1)T)^+,K ]$ for $n \geq 0$. It means, that the model that we propose in this paper, coincides (in law) with the classical $D/GI/1 +D$ queuing model for the reneging scenario, in which the sequence of service durations $(s_n, n \geq 0)$ is defined by $s_n := \sigma_n + v_{n+1}$, $n \geq 0$. In other words, the non observable data, $(v_n, n \geq 1)$ may be regarded as a sequence of marks for the arrival process.
\end{rem}

\begin{rem}
\label{rem2}
As in Remark~\ref{rem1}, this model coincides (in law) with the model defined by $\tilde{z}_0 = 0$, $\tilde{z}_{n+1} = [\tilde{z}_n + (\sigma_n + v_{n+1})\mathbf{1}_{(\tilde{z}_n < K)} - T]^+$. Thus, it may be reduce to the $D/GI/1 + D$ queue for balking customers.
\end{rem}

\begin{rem}
\label{rem3}
The $D/G/1 + D$ queue was studied by Ghosal in \cite{Ghosal1963}. The author derived an integral equation for the stationary waiting time based on the model introduced by Finch in \cite{Finch1960} (see also \cite{Ghosal1970}, Chapter 1, equation (1.4)), where the case $w_n=K$ is not taken into account (see \cite{Finch1961}, for a correction). Thus, the integral equation derived in \cite{Ghosal1963} does not take into account the case where customers left prematurely the queue. 
\end{rem}

\begin{lem}
\label{lemma2}
Under the condition $\mathbb{P}(\sigma_0 + v_1 < T) > 0$,  we have $\mu < \infty $.
\end{lem}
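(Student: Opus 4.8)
The plan is to use the regenerative structure of the system: the customers who arrive to an idle server are regeneration epochs, and — since $\mathbb{P}(\sigma_0+v_1<T)>0$ — they recur, so $\mu$ is a mean over one busy cycle. I shall bound both the mean number $\mathbb{E}[s]$ of \emph{entering} customers in a cycle and the mean number $\mathbb{E}[\nu_0^1]$ of all customers in a cycle; either reading of $\mu$ is then covered.

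First I would prove $\mathbb{E}[s]<\infty$. Let $Y_1=0,Y_2,\dots,Y_s\in[0,K)$ be the workloads seen by the successive entering customers of the cycle. By Lemma~\ref{lemma1} the service-plus-vacation durations $\xi_i$ attached to these customers form an i.i.d.\ sequence with distribution $G$, and $Y_{i+1}$ is a deterministic function of $Y_i$ and $\xi_i$: one adds $\xi_i$, subtracts $T$, and — if the result is still $\geq K$ — decreases it by multiples of $T$ until it re-enters $[0,K)$. Since $\mathbb{P}(\sigma_0+v_1<T)>0$ there is $\varepsilon\in(0,T)$ with $p':=\mathbb{P}(\sigma_0+v_1\leq T-\varepsilon)>0$; and whenever $\xi_i\leq T-\varepsilon$ one has $Y_i+\xi_i-T\leq Y_i-\varepsilon<K$, so no ``coasting'' occurs and $Y_{i+1}\leq(Y_i-\varepsilon)^+$. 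Hence, from any state in $[0,K)$, the occurrence of $N:=\lceil K/\varepsilon\rceil$ consecutive ``small'' durations forces the workload down to $0$, i.e.\ ends the cycle, within $N$ further services. By the Markov property and the independence of the $\xi_i$, $\mathbb{P}(s>i+N\mid s>i)\leq 1-(p')^N$ for every $i$, whence $\mathbb{P}(s>kN)\leq(1-(p')^N)^k$ and $\mathbb{E}[s]\leq N(p')^{-N}<\infty$. This already settles the lemma if $\mu=\mathbb{E}[\nu_0^1]$ refers to the entering customers only.

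Next I would pass to all customers. After the $i$-th entering customer, the work present (counted from his arrival) is $Y_i+\xi_i<K+\xi_i$, and a later arrival reneges only while that work, reduced by multiples of $T$, is still $\geq K$; so at most $(K+\xi_i)/T$ customers renege between the $i$-th and the $(i+1)$-th service, and $\nu_0^1\leq s+T^{-1}\sum_{i=1}^{s}(K+\xi_i)$. As $s$ is a stopping time for the i.i.d.\ sequence $(\xi_i)$ and $\mathbb{E}[s]<\infty$, Wald's identity gives
\[
\mathbb{E}[\nu_0^1]\leq \mathbb{E}[s]\,\bigl(1+K/T+\mathbb{E}[\sigma_0+v_1]/T\bigr)<\infty ,
\]
using the (standing) finiteness of the mean service and vacation times.

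The hard part is the middle step. The workload is not confined to the compact set $[0,K)$: above $K$ it decreases only deterministically, by $T$ per step, so an excursion above $K$ can be long, its length being governed by the (possibly heavy) tail of $\sigma_0+v_1$; this is why counting \emph{all} customers of a cycle needs a moment assumption while counting only the entering ones does not. The recurrence argument must therefore be carried out, as above, at the level of the embedded ``service-start'' workloads $Y_i$, where the deadline $K$ genuinely produces a compact state space, and the independence bookkeeping behind both the geometric estimate and Wald's identity is exactly the content of Lemma~\ref{lemma1}.
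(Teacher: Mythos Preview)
Your argument is correct and considerably more explicit than the paper's. The paper proceeds quite differently: it passes to the balking workload $(\tilde z_n)$ of Remark~\ref{rem2}, observes that $w_n\le\tilde z_n$ pathwise (so the reneging renewal epochs dominate the balking ones), notes that the balking regeneration sequence is aperiodic under $\mathbb{P}(\sigma_0+v_1<T)>0$, and then invokes Asmussen's renewal theorem to conclude $\mu\le\tilde\mu=(q_0)^{-1}<\infty$ from the assertion $q_0:=\lim_n\mathbb{P}(\tilde z_n=0)>0$. No justification of $q_0>0$ is given.

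Your route is the more robust one. By working with the embedded chain $(Y_i)$ of workloads at service starts you keep the state in the compact interval $[0,K)$, which is exactly what makes the uniform geometric hitting bound go through without any moment hypothesis; this yields $\mathbb{E}[s]<\infty$ unconditionally. The Wald step you use to pass from $\mathbb{E}[s]$ to $\mathbb{E}[\nu_0^1]$ is also the right bridge, and your identification of \emph{where} the moment assumption $\mathbb{E}[\sigma_0+v_1]<\infty$ enters is accurate: since $\nu_0^1=\lceil T^{-1}\sum_{i=1}^s\xi_i\rceil\ge T^{-1}\xi_1$, one has $\mu=\infty$ whenever $\mathbb{E}[\sigma_0+v_1]=\infty$, so the lemma genuinely needs that hypothesis. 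The paper's short proof hides the same requirement inside the unproved claim $q_0>0$; your version makes it visible. In short, the paper buys brevity by quoting a comparison and a theorem, while your argument is self-contained, isolates the compactness that drives recurrence, and pinpoints the extra integrability needed to count reneging customers.
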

\begin{proof} 
The sequence $ (\tilde{z}_n, n\geq 0)$ defined in Remark 2, is a regenerative process with respect to the renewal sequence $ (\tau_0^k, k\geq 0)$, where $\tau_0^k$ is the number of customers entering during the $k$th busy period (wit h $\tau_0^0 = 0$) and provided $\mathbb{P}(\tau_0^1 < \infty) = 1$. When comparing $w_n$ with $\tilde{w}_n$ and using Remark 2 we have $w_n \leq \tilde{w}_n \stackrel{\mathcal{L}}{=} z_n$ for all $n\geq 0$. Since $\mathbb{P}(v_1 + \sigma_0 < T) > 0$, the renewal sequence $(\tau_0^k, k \geq 0)$ is aperiodic. From Theorem 2.2 in~\cite{Asmussen} we have $\mu \leq \tilde{\mu} = \mathbb{E}(\tau_0^1) = (q_0)^{-1} < \infty$, where $q_0 := \lim_{n \rightarrow \infty}\mathbb{P}(\tilde{z}_n = 0) > 0$. 
\end{proof}

\noindent
We now are able to prove Corollary~\ref{thm1}.

\begin{proof} 
\textit{Existence}. The first part of the proof is similar to that of Theorem 1 in \cite{Daley65}. For $ x \geq 0$, we introduce the function
\begin{equation*}
F_n(x) = \mathbf{P}(w_n \leq x, w_k > 0, k = 1, \ldots, n-1).
\end{equation*}
Then  $F_n(0) = f_n$ and $F_n(\infty) = f_n + f_{n+1}+ \ldots$ .

\noindent
One computes
\begin{align*}
\begin{array}{llll}
W_n(x) &= \sum_{k=0}^{n-1} \mathbf{P}(w_k = 0, w_{k+1} > 0, \ldots , w_n \leq x)\\
\\&=\sum_{k = 1}^n u_{n-k} F_k(x). \vspace{0.5 cm}\\
\end{array}
\end{align*}
Since $\sum_{n \geq 1}f_n = 1$ and $(f_n ,n \geq 1)$ is aperiodic the Theorem 2.2 of \cite{Asmussen} yields 
\begin{equation*}
\lim_{n \rightarrow \infty} u_n = \mu^{-1}.
\end{equation*}
Furthermore, for all $x \in[0,K)$, $0 \leq F_n(x) \leq  F_n(\infty)$,we obtain
\begin{equation*}
\sum_{n = 1}^{\infty}F_n(x) \leq  \sum_{n = 1}^{\infty}F_n(\infty) = \sum_{n = 1}^{\infty} nf_n  = \mu < \infty.
\end{equation*}
The series uniformly converges over $x \in [0,K)$. It follows from Theorem 1 p. 318 of ~\cite{FellerI} that the sequence $(W_n(x))$ converges uniformly for $x \in [0,K)$ and therefore the limit function $W(x)$ is a distribution function.

\noindent \\
\textit{Limit value}. For any integer $m$, we introduce the subdivision selected on continuity points of $W$
\begin{eqnarray*}
0 = w_{m,0} < w_{m,1} < \ldots < w_{m,l_m} = K,
\end{eqnarray*}
and $\Delta_m = \sup_{1 \leq j \leq l_m}(w_{m,j} - w_{m,j-1})$ such that $\lim_{m \rightarrow \infty} \Delta_m = 0$. For $x$ and $w$ $\in [0,K)$, define the sequence $(G^x_k)_{k \geq 0}$  such that
\begin{eqnarray*}
\left\lbrace \begin{array}{lll}
                 G_0^x(w) = G(a^x_0(w)),\vspace{0.3cm}
                 G_k^x(w) = G(a_k^x(w)) - G(b_k(w)), \;\; k \geq 1.                                          
\end{array} \right. 
\end{eqnarray*}
The function $G_k^x$ depends on $T,K$ and $\sigma$. For sake of simplicity, we omitted these parameters. For all $k$, the functions $G_k^x(w)$ are continuous on $[0,K)$ uniformly over $x$. Thus, according to the definition of Riemann-Stieltjes integral

\begin{equation}
\label{equation10}
\lim_{n \rightarrow \infty} W_{n+1}(x) =\lim_{n \rightarrow \infty} \lim_{m \rightarrow \infty} \sum_{j=1}^{l_m}\sum_{k=0}^n G^x_k (w_{m,j-1})[W_{n-k}(w_{m,j}) - W_{n-k}(w_{m,j-1})].
\end{equation}

\noindent
Define the sequence $(\alpha_{m,n})_{m \geq 0, n \geq 0}$ by
\begin{equation*}
\alpha_{m,n} = \sum_{j=1}^{l_m}\sum_{k=0}^n G^x_k(w_{m,j-1})[W_{n-k}(w_{m,j}) - W_{n-k}(w_{m,j-1})].
\end{equation*}

\noindent
The sequence $W_n$ and the series $ \sum_{n=0}^{\infty} G^x_n(w) $  are uniformly convergent. Thus, according to Theorem 1 p. 318 in \cite{FellerI}, the sequence $(\alpha_{m,n})_{n \geq 0}$ converges uniformly over $m$ to
\begin{eqnarray*}
\sum_{j=1}^{l_m}\sum_{n=0}^{\infty} G^x_n (w_{m,j-1})[W(w_{m,j}) - W(w_{m,j-1})].
\end{eqnarray*}

\noindent
Furthermore, the definition of Riemann-Stieltjes integral gives the convergence of $(\alpha_{m,n})_{m \geq 0}$ to 
\[ \int_{0^-}^{K-0} \sum_{k=0}^n G_k^x(w)dW_{n-k}(w).\]

\noindent
Thus, we may invert the limit in (\ref{equation10}) which yields (\ref{equation7}).

\noindent \\
\textit{Probability of rejection.} This probability is expressed as
\begin{align*}
\mathbb{P}(w_n = K) &= \sum_{k=1}^{n} \mathbb{P}(w_n = K, w_{n-k} < K, w_{n-j} = K, j = 1, \ldots , k-1)\\
                    & = \sum_{k=1}^{n} \int_{0^-}^{K-0} \left[ 1 - G(b_k(w)) \right]dW_{n-k}.
\end{align*}

\noindent
Since $W(x)$ exists, (\ref{equation9}) is proved.
 
\end{proof}

\noindent
Corollary~\ref{thm2} is similar to Corollary~\ref{thm1}. It establishes the sufficient condition for the stability for the balking scenario and gives the stationary workload distribution together with the blocking probability
$B_K := \underset{n \rightarrow \infty}{\lim}  \mathbb{P}(\tilde{w}_n \geq K)$.

\begin{cor} 
\label{thm2}
For the $D/GI/1$ queue with single vacations, constant deadline and single service discipline, assuming that $\mathbb{P}(v_1 + \sigma_0  < T) > 0  $, the limiting waiting time distribution function $\tilde{W}$ exists. Moreover, if the distribution function $G$ is continuous on $\mathbb{R}^+$, then $\tilde{W}$ satisfies
\begin{equation}
\label{equation01}
\tilde{W}(x) = \int_{0-}^{K-0} G(x - w + T)d\tilde{W}(w) + \int_{K-0}^{x+T} d\tilde{W}(w), \quad x \geq 0.
\end{equation}

\noindent
The blocking probability is given by
\begin{equation*}
B_K = \int_{0^-}^{K-0} \left[ 1 - G(K - w + T) \right]d\tilde{W}(w) + \int_{K+T}^{\infty}d\tilde{W}(w).
\end{equation*}
\end{cor}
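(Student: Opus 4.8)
The plan is to mirror the proof of Corollary~\ref{thm1}, with Remark~\ref{rem2} and Lemma~\ref{lemma2} playing the roles of Remark~\ref{rem1} and the bound on $\mu$. For \textit{existence}, I would use Remark~\ref{rem2}: $(\tilde w_n)$ has the law of the regenerative sequence $(\tilde z_n)$ of the $D/GI/1+D$ balking queue, whose regeneration indices $(\tau_0^k)$ are the instants at which the workload empties. The hypothesis $\mathbb{P}(\sigma_0+v_1<T)>0$ yields $\mathbb{P}(\tau_0^1=1)\ge\mathbb{P}(\sigma_0+v_1<T)>0$, hence the cycle length is aperiodic, while $\mathbb{P}(\tau_0^1<\infty)=1$ and $\tilde\mu:=\mathbb{E}(\tau_0^1)<\infty$ are furnished by (the argument of) Lemma~\ref{lemma2}. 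Then, as in Corollary~\ref{thm1}, decompose on the last visit to $0$: $\tilde W_n(x)=\sum_{k=1}^n\tilde u_{n-k}\tilde F_k(x)$ with $\tilde u_n=\mathbb{P}(\tilde w_n=0)$ and $\tilde F_k(x)=\mathbb{P}(\tilde w_k\le x,\ \tilde w_j>0,\ j=1,\dots,k-1)$, the only change from the reneging case being that $x$ now runs over all of $\mathbb{R}^+$. Since $\sum_{k\ge1}\tilde F_k(\infty)=\sum_{k\ge1}k\tilde f_k=\tilde\mu<\infty$ (with $\tilde f_k$ the first-return-to-$0$ probabilities), the series $\sum_k\tilde F_k$ converges uniformly on $\mathbb{R}^+$; together with $\tilde u_n\to\tilde\mu^{-1}$ (Theorem~2.2 of~\cite{Asmussen}, using $\sum_k\tilde f_k=1$ and aperiodicity) and Theorem~1 p.~318 of~\cite{FellerI}, this gives $\tilde W_n\to\tilde W:=\tilde\mu^{-1}\sum_{k\ge1}\tilde F_k$ uniformly on $\mathbb{R}^+$, and $\tilde W$ is a distribution function because it is a uniform limit of such and $\tilde W(\infty)=1$.

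For the \textit{integral equation}, I would pass to the limit in~\eqref{equation6}. Its second term equals $\tilde W_n(x+T)-\tilde W_n(K-0)$, and uniform convergence of $\tilde W_n$ on $\mathbb{R}^+$ (in particular at $K$ from the left) sends it to $\tilde W(x+T)-\tilde W(K-0)$. In its first term, $w\mapsto G(x-w+T)$ is bounded and continuous on $[0,K]$ since $G$ is continuous, so the Riemann--Stieltjes subdivision argument of Corollary~\ref{thm1} (a partition of $[0,K)$ along continuity points of $\tilde W$, uniform convergence of $\tilde W_n$, uniform continuity of $G$ on compacts) gives $\int_{0-}^{K-0}G(x-w+T)\,d\tilde W_n(w)\to\int_{0-}^{K-0}G(x-w+T)\,d\tilde W(w)$; letting $n\to\infty$ in~\eqref{equation6} then produces~\eqref{equation01}.

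For the \textit{blocking probability}, I would evaluate $\mathbb{P}(\tilde w_{n+1}\ge K)$ from~\eqref{equation3}, splitting on $\{\tilde w_n<K\}$ and $\{\tilde w_n\ge K\}$: on the first event a customer enters and $\tilde w_{n+1}\ge K$ iff $\sigma_n+v_{N(n+1)}\ge K-\tilde w_n+T$, while on the second the arrival balks and $\tilde w_{n+1}=(\tilde w_n-T)^+\ge K$ iff $\tilde w_n\ge K+T$. Conditioning on $\tilde w_n$ and on $N(n+1)$ and using Lemma~\ref{lemma1} for the independence of $v_{N(n+1)}$ (exactly as in the derivation of~\eqref{equation6}), together with the continuity of $G$, yields
\begin{equation*}
\mathbb{P}(\tilde w_{n+1}\ge K)=\int_{0-}^{K-0}\bigl[1-G(K-w+T)\bigr]\,d\tilde W_n(w)+\int_{K+T}^{\infty}d\tilde W_n(w),
\end{equation*}
and passing to the limit with the uniform convergence of $\tilde W_n$ gives the stated expression for $B_K$.

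The main obstacle is the \textit{existence} step: one must check that, in the balking model, the successive emptyings of the workload still form a proper, aperiodic renewal sequence with finite mean cycle length even though $\tilde w_n$ is unbounded — this is exactly where $\mathbb{P}(\sigma_0+v_1<T)>0$ and Lemma~\ref{lemma2} are needed. Once that is in place, the limit passages in~\eqref{equation6} and in the blocking identity are routine, the only delicate point being that one needs uniform — not merely weak — convergence of $\tilde W_n$ in order to evaluate the limits at the possibly atomic points $K$ and $x+T$.
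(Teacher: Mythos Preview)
Your proposal is correct and follows essentially the same approach as the paper: the paper's own proof simply states that the existence part is ``similar to that of Corollary~\ref{thm1}'' and that the integral equation is obtained by ``an Helly--Bray type argument'' (citing~\cite{LoeveI}). Your explicit subdivision argument for the limit passage in~\eqref{equation6}, together with the care you take over possible atoms via uniform (rather than merely weak) convergence of $\tilde W_n$, is a fleshed-out version of what the paper only sketches.
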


\begin{proof}
The first part of the proof is similar to that of Corollary~\ref{thm1}. The limit $\tilde{W}$ is obtained by an Helly-Bray type argument (see, for example, ~\cite{LoeveI}). 
\end{proof}

\begin{rem}
\label{rem4}
Integrating by parts the first term of Equation~\eqref{equation01}) leads to
\[  G(x+T-K) - \int_{0^-}^{K-0} W(w)dG(T+x-w), \]
with the condition that $G(u) = 0$ and $G(u) = 0$ for $u < 0$. This equation is the same as Equation (1) of Ghosal in \cite{Ghosal1963}, but the author does not consider the case where the previous customer abandons the queue without being served.
\end{rem}

\section{The case of deterministic service and exponentially distributed vacation times}

In this section, we analyse the reneging model under the assumptions that customers require a deterministic service duration $\sigma > 0$ and the vacations are exponentially distributed with parameter $\lambda > 0$.  Equation \eqref{equation7} becomes
\begin{equation}
\label{equation19}
W(x) = \int_{0^-}^{K-0} \sum_{n \geq 0} V_n^x(w) dW(w),
\end{equation}

\noindent
where
\begin{equation*}
\sum_{n \geq 0} V_n^x(w) =  \sum_{n \geq 0} \mathbb{P}(b_n(w) \leq  v \leq a_n^x(w)),
\end{equation*}

\noindent 
and 
\begin{eqnarray*}
\left\{\begin{array}{lll} 
a_n^{x,T}(w) := x - \sigma - w + (n+1)T, \hspace{1.4 cm} n \geq 0, \vspace{0.3 cm}\\
b_0 := 0, \quad b_n^{K,T}(w) := K - \sigma - w + kT, \hspace{0.6 cm}  n \geq 1.\\
\end{array}\right. \vspace{0.3 cm}
\end{eqnarray*}

\noindent
Substituting $V(x) = 1 - e^{-\lambda x}$, $x \geq 0$ in \eqref{equation19} gives, for $0 \leq x < K, \; 0 \leq w < K$
\begin{equation}
\label{equation20}
\begin{split}
W(x) &=  1 - e^{-\lambda(x-\sigma +T)} + \alpha_{\lambda}[e^{-\lambda(K-\sigma)} - e^{-\lambda(x-\sigma+T)}] \\
& \quad + \int_{0^+}^{K-0} 1 - e^{-\lambda (x - w - \sigma + T)} dW(w) + \alpha_{\lambda} \int_{0^+}^{K-0} e^{-\lambda(K - w - \sigma)} - e^{-\lambda (x - w - \sigma + T)} dW(w),
\end{split} 
\end{equation}
where $\alpha_{\lambda} = \dfrac{e^{-\lambda T}}{1 - e^{-\lambda T}}$. From Equation~\eqref{equation20} and the Lebesgue's dominated convergence theorem it follows that
\begin{equation*}
\lim_{n \rightarrow \infty} \dfrac{W(x+h_n)-W(x)}{h_n} \leq \alpha_{\lambda} \lambda e^{\lambda(K+\sigma)},
\end{equation*}
where $ h_n \longrightarrow 0$ as $n \rightarrow \infty$. 
The distribution function $W$ is differentiable on $(0,K)$ and has a bounded derivative with a finite number of discontinuity (at points $ x=0$ and $x=K$). Hence $W$ is absolutely continuous with respect to the Lebesgue measure and we denote by $f$ the probability density function (pdf) of $W$. Taking the derivative in Equation~\eqref{equation14} yields
\begin{equation}
\label{equation21}
f(x) = W(0)\alpha e^{\lambda \sigma}\lambda e^{-\lambda x}  +  \lambda \alpha e^{\lambda \sigma}\int_{0}^x  e^{-\lambda(x - w)}f(w)dw, \quad  0 < x < K. 
\end{equation}

\noindent
Suppose that there exists a function $g$ satisfying, for all $x > 0$,
\begin{equation}
\label{equation22}
g(x) = G(0)\alpha e^{\lambda \sigma}\lambda e^{-\lambda x}  +  \lambda \alpha e^{\lambda \sigma}\int_{0}^x  e^{-\lambda(x - w)}g(w)dw,
\end{equation}

\noindent
where $G$ is the distribution function of $g$, and $W(0) = G(0)$. Taking the Laplace transform of (\ref{equation22}), we have
\begin{eqnarray*}
\Phi(\theta) =  W(0)\alpha e^{\lambda \sigma}\dfrac{\lambda}{\lambda + \theta} + \alpha e^{\lambda \sigma}\dfrac{\lambda}{\lambda + \theta} \Phi(\theta),\quad \theta > \lambda(\alpha e^{\lambda \sigma} - 1).
\end{eqnarray*}

\noindent
Rewriting this last equation gives
\begin{eqnarray*} 
\Phi(\theta) = \dfrac{G(0)\lambda \alpha e^{\lambda \sigma} }{\theta - \lambda(\alpha e^{\lambda \sigma} -1)}.
\end{eqnarray*}

\noindent
Hence, by inversion we obtain 
\begin{equation*}
g(x) = G(0)\lambda\alpha e^{\lambda \sigma} e^{\lambda (\alpha  e^{\lambda \sigma} -1 )x}, \quad x > 0.
\end{equation*}

\noindent
Identifying $f$ with $g$ on $(0,K)$ leads to
\begin{equation}
\label{equation23}
f(x) = W(0)\lambda\alpha e^{\lambda \sigma} e^{\lambda (\alpha  e^{\lambda \sigma} -1 )x}, \quad  0 < x < K.
\end{equation}

\noindent
The constant $W(0)$ is evaluated by the condition
\begin{eqnarray*}
W(0) + \int_0^K f(x)dx + B_K = 1
\end{eqnarray*}

\noindent
thus
\begin{align}
\label{equation24}
W(0) &= [1 - B_K] \left[ 1 + \int_{0^-}^K \lambda\alpha e^{\lambda \sigma} e^{\lambda (\alpha  e^{\lambda \sigma} -1 )x} dx \right]^{-1}  \nonumber \\
 &= [1 -B_K] \left[ \dfrac{\alpha_{\lambda} e^{\lambda \sigma} - 1}{\alpha_{\lambda} e^{\lambda(K \alpha_{\lambda}e^{\lambda \sigma} - K + \sigma)} -1} \right],
\end{align}

\noindent 
where $B_K$ is the probability of rejection
\begin{equation}
\label{equation25}
B_K = \int_{0^-}^{K-0} \sum_{n=1}^{\infty} \left[ 1 - V(K - w - \sigma + nT) \right]dW(w).
\end{equation}

\noindent
Substituting $V(x) = 1 - e^{-\lambda x}$ in (\ref{equation25}) gives
\begin{equation}
\label{equation26}
B_K = W(0)\alpha_{\lambda} e^{-\lambda(K-\sigma-\alpha_{\lambda}e^{\lambda \sigma})}.
\end{equation}

\noindent
We have the following Proposition.

\begin{prp}
In the $D/D/1$ queue with exponential vacation times and single service discipline, under the conditions of Theorem 1, the density of $W$ on $(0,K)$ is
\begin{equation*}
f(x) = W(0)\lambda\alpha e^{\lambda \sigma} e^{\lambda (\alpha  e^{\lambda \sigma} -1 )x},
\end{equation*}

\noindent
where $W(0)$ and $B_K$ are given by (\ref{equation24}), (\ref{equation26}) respectively.
\end{prp}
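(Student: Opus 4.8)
The plan is to start from the stationary integral equation \eqref{equation7} of Corollary~\ref{thm1}, specialized to the present case: here $B$ is the unit mass at $\sigma$ and $V(x)=1-e^{-\lambda x}$, so the distribution function $G$ of $\sigma_0+v_1$ is the shifted exponential $G(u)=(1-e^{-\lambda(u-\sigma)})\mathbf{1}_{(u\ge\sigma)}$. Substituting this $G$ into \eqref{equation7} and summing over $n$ the two geometric series that appear (one from the $a_n^x(w)$ terms, one from the $b_n(w)$ terms, both of ratio $e^{-\lambda T}$) produces the explicit closed form \eqref{equation20} for $W(x)$, $0\le x<K$, in which the constant $\alpha_\lambda=e^{-\lambda T}/(1-e^{-\lambda T})$ collects the geometric sums.

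Next I would establish that $W$ is absolutely continuous on $(0,K)$ with a bounded density. From \eqref{equation20}, for any sequence $h_n\to 0$ the difference quotient $(W(x+h_n)-W(x))/h_n$ is dominated, uniformly in $x\in(0,K)$, by the constant $\alpha_\lambda\lambda e^{\lambda(K+\sigma)}$; dominated convergence then gives that $W$ is Lipschitz on $(0,K)$, hence differentiable a.e.\ there with bounded derivative, so $W$ has a bounded density $f$ on $(0,K)$ with atoms only at $0$ and $K$. Differentiating \eqref{equation20} in $x$ — the pointwise terms are smooth and one may differentiate under the integral sign thanks to the uniform bound just obtained — collapses the two integrals into the single convolution-type Volterra equation \eqref{equation21} for $f$.

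Then I would solve \eqref{equation21} by the Laplace-transform device already set up in the text: since \eqref{equation21} is a convolution equation on $(0,K)$, I would look for a function $g$ satisfying the \emph{same} equation \eqref{equation22} for all $x>0$ with $G(0)=W(0)$, take Laplace transforms (valid for $\theta>\lambda(\alpha_\lambda e^{\lambda\sigma}-1)$), solve the resulting scalar linear equation for $\Phi(\theta)$, and invert to get $g(x)=W(0)\lambda\alpha_\lambda e^{\lambda\sigma}e^{\lambda(\alpha_\lambda e^{\lambda\sigma}-1)x}$. Because the Volterra kernel $\lambda\alpha_\lambda e^{\lambda\sigma}e^{-\lambda(x-w)}$ is bounded on $\{0\le w\le x\le K\}$, equation \eqref{equation21} has a unique solution in $L^\infty(0,K)$; as the restriction of $g$ to $(0,K)$ solves it, we conclude $f\equiv g$ on $(0,K)$, which is \eqref{equation23}. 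The constant $W(0)$ is then fixed by the normalization $W(0)+\int_0^K f(x)\,dx+B_K=1$, giving \eqref{equation24} after the elementary integral of \eqref{equation23}, while $B_K$ follows from \eqref{equation9} (equivalently \eqref{equation25}) specialized to $V(x)=1-e^{-\lambda x}$: the sum over $n\ge 1$ is again geometric, and substituting \eqref{equation23} and integrating over $w\in(0,K)$ yields \eqref{equation26}. Combining \eqref{equation24} and \eqref{equation26} determines $W(0)$ and $B_K$ explicitly.

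The hard part will be the analytic justification of the passage from the Stieltjes equation \eqref{equation20} to the density equation \eqref{equation21}: one must verify the Lipschitz bound on $W$ over $(0,K)$, legitimately interchange differentiation and integration, and then invoke uniqueness for the Volterra equation so that the Laplace-transform candidate $g$ is genuinely the density and not merely one solution among several. Once these points are secured, the remainder is bookkeeping with geometric series and the single elementary integral $\int_0^K e^{\lambda(\alpha_\lambda e^{\lambda\sigma}-1)x}\,dx$.
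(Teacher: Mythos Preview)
Your proposal is correct and follows essentially the same route as the paper: specialize \eqref{equation7} to the shifted-exponential $G$, sum the geometric series to obtain \eqref{equation20}, use the difference-quotient bound to get absolute continuity and differentiate to the Volterra equation \eqref{equation21}, solve it via the Laplace transform of an extension $g$ on $(0,\infty)$, and then normalize to recover $W(0)$ and $B_K$. The only (minor) addition you make is the explicit appeal to uniqueness for the bounded-kernel Volterra equation to justify ``identifying $f$ with $g$,'' which the paper glosses over (though its Remark~\ref{rem5} on the resolvent method implicitly supplies it).
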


\begin{rem}
\label{rem5}
Equation (\ref{equation21}) is of the type 
\begin{equation*}
f(x) = h(x) + \Lambda \int_{0}^x K(x-w)f(w)dw, \quad  0 < x < K,
\end{equation*}

\noindent
which is the so-called Volterra equation. Applying the method of the resolvent, we obtain
\begin{equation*}
K_{n+1}(x,w) = \dfrac{(x-w)^n}{n!}e^{-\lambda(x-w)}, \quad n \geq 0.
\end{equation*}

\noindent
Therefore, the resolvent kernel of (\ref{equation21}) is
\begin{equation*}
R(x,w;\Lambda) = e^{(\Lambda -\lambda)(x-w)},
\end{equation*}

\noindent
and the solution is given by
\begin{equation*}
f(x) = h(x) + \Lambda \int_{0}^x e^{(\Lambda -\lambda)(x-w)} h(w)dw.
\end{equation*}

\noindent
The calculation of the previous equation yields (\ref{equation23}).
\end{rem}

We now focus on the time-dependent waiting time distribution. 

\begin{prp}
For all $n \geq 0$ and $x \geq 0$ the time-dependent distribution of the waiting time for the balking model is given by
\begin{equation*}
\mathbb{P}(\tilde{w}_{n+1} > x) = \sum_{j=0}^n \binom{n}{j}K^{n-j}\lambda^{n-j}e^{-\lambda(x+(n+1)T - (n+1-j)\sigma)}.
\end{equation*}
\end{prp}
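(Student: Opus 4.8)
The plan is to induct on $n$ using the one–step workload recursion \eqref{equation3} of the balking model, or equivalently its distributional form \eqref{equation6}, together with the fact that for exponential vacations $\sigma_0+v_1$ has distribution function $G(y)=1-e^{-\lambda(y-\sigma)}$ on $y\ge\sigma$ (and $G(y)=0$ for $y<\sigma$). It is convenient to work with the survival function $\bar W_n(x):=\mathbb{P}(\tilde w_n>x)$: subtracting \eqref{equation6} from $1$ and using $\int_{0-}^{K-0}d\tilde W_n=\tilde W_n(K-0)$ rewrites the recursion as
\[
\bar W_{n+1}(x)=\bar W_n(x+T)+\int_{0-}^{K-0}\bigl[1-G(x-w+T)\bigr]\,d\tilde W_n(w),\qquad x\ge 0 .
\]
The base case $n=0$ is immediate: since $\tilde w_0=0$ we have $\tilde w_1=[\sigma_0+v_1-T]^+$, hence $\bar W_1(x)=\mathbb{P}(v_1>x+T-\sigma)=e^{-\lambda(x+T-\sigma)}$, which is the asserted right–hand side at $n=0$.

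For the inductive step I would substitute the hypothesis into the displayed recursion. The stated formula is equivalent to saying that $\tilde w_n$, conditioned on $\tilde w_n>0$, is $\mathrm{Exp}(\lambda)$ (the $x$–dependence being the bare factor $e^{-\lambda x}$), i.e. $\tilde w_n$ is a mixture of a unit mass at $0$ with weight $1-\bar W_n(0)$ and an $\mathrm{Exp}(\lambda)$ law with weight $\bar W_n(0)$. Inserting this mixture into the Stieltjes integral, the atom contributes $\mathbb{P}(\tilde w_n=0)\,e^{-\lambda(x+T-\sigma)}$, and on $(0,K)$ the factor $e^{\lambda w}$ coming out of $1-G(x-w+T)$ cancels the $e^{-\lambda w}$ of the density, so the density part is again a constant times $e^{-\lambda(x+T-\sigma)}$; one finds the integral equals $e^{-\lambda(x+T-\sigma)}\bigl[1+(\lambda K-1)\bar W_n(0)\bigr]$. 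Adding $\bar W_n(x+T)$, which by the hypothesis is $e^{-\lambda x}$ times a constant, gives $\bar W_{n+1}(x)=e^{-\lambda x}\times(\text{constant})$; expanding that constant in powers of $K\lambda$ and $e^{-\lambda\sigma}$ and collapsing the resulting sum with Pascal's rule $\binom{n}{j}+\binom{n}{j-1}=\binom{n+1}{j}$ should produce the $(n+1)$ version. Equivalently, one may observe that the stated sum equals the compact expression $\mathbb{P}(\tilde w_{n+1}>x)=e^{-\lambda x}\,e^{-\lambda(n+1)(T-\sigma)}\,(K\lambda+e^{-\lambda\sigma})^{n}$ (just the binomial theorem) and run the whole induction on that form, which makes the algebra essentially trivial.

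The step I expect to be the main obstacle is the region bookkeeping concealed in the identity $1-G(x-w+T)=e^{-\lambda(x-w+T-\sigma)}$: this holds only when $x-w+T\ge\sigma$, and since $w$ runs over $[0,K)$ this needs $x+T-\sigma\ge K$ — below that range the integrand has a flat piece and a genuine case split is required (and similarly one must decide how \eqref{equation6} is read when $x+T<K$). Tied to this, one should first record that $\tilde w_n$ has no atom except at $0$ — a consequence of the continuity of the vacation law — which is what legitimises the mixture representation and the treatment of the endpoint $K$ in \eqref{equation6}. Once these points are nailed down, the rest is the routine collection of exponentials and binomial coefficients, plus a final verification that the constants generated by the recursion match exactly those in the claimed closed form.
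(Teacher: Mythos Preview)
Your plan coincides with the paper's: induction on $n$, split $\{\tilde w_n<K\}$ versus $\{\tilde w_n\ge K\}$, integrate using $1-G(y)=e^{-\lambda(y-\sigma)}$, and recombine via Pascal's rule. The paper evaluates $\mathbb P(\tilde w_{n+1}>x,\tilde w_n<K)$ by integrating $e^{-\lambda(x-w+T-\sigma)}$ against the \emph{density} of $\tilde w_n$ on $(0,K)$ (the $e^{\lambda w}$ and $e^{-\lambda w}$ cancel, producing the extra factor $K\lambda$), identifies $\mathbb P(\tilde w_{n+1}>x,\tilde w_n\ge K)$ with $\bar W_n(x+T)$, shifts the index in the second sum, and adds. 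Your compact rewriting $\bar W_{n+1}(x)=e^{-\lambda x}e^{-\lambda(n+1)(T-\sigma)}(K\lambda+e^{-\lambda\sigma})^n$ is an equivalent way to organise the same computation.

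There is, however, a real obstruction at the step you label ``should produce the $(n+1)$ version''. Your integral $e^{-\lambda(x+T-\sigma)}[1+(\lambda K-1)\bar W_n(0)]$ is correct once the atom at $0$ is included; but then, writing $\bar W_n(x)=C_ne^{-\lambda x}$, your recursion reads
\[
C_{n+1}=e^{-\lambda(T-\sigma)}+C_n\,e^{-\lambda(T-\sigma)}\bigl(K\lambda+e^{-\lambda\sigma}-1\bigr),
\]
whereas the proposition's formula (in compact form $C_n=e^{-\lambda n(T-\sigma)}(K\lambda+e^{-\lambda\sigma})^{n-1}$) satisfies the \emph{homogeneous} recursion $C_{n+1}=C_n\,e^{-\lambda(T-\sigma)}(K\lambda+e^{-\lambda\sigma})$. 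These agree only when $C_n=1$, i.e.\ when $\mathbb P(\tilde w_n=0)=0$. The paper's argument closes precisely because its integral for $\{\tilde w_n<K\}$ uses only the density on $(0,K)$ and does not pick up the atom at $0$; the range conditions you worry about ($x+T-\sigma\ge w$ for all $w<K$, and $x+T\ge K$ so that $\mathbb P(\tilde w_{n+1}>x,\tilde w_n\ge K)=\bar W_n(x+T)$) are likewise used tacitly. So your caution is well founded, but carrying it through as written will not reproduce the displayed identity; to match the paper you have to drop exactly the atom term you are careful to keep.
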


\begin{proof}
The proof is by induction. For $n = 0$, we have $\mathbb{P}(\tilde{w}_1 > x) = \mathbb{P}(\sigma + v_1 - T > x) = e^{-\lambda(x + T - \sigma)}$. Assume that $\mathbb{P}(\tilde{w}_{n} > x) = \sum_{j=0}^{n-1} \binom{n-1}{j}K^{n-1-j}\lambda^{n-1-j}e^{-\lambda(x+nT - (n-j)\sigma)}$ hold. Then,

\begin{equation*}
\begin{split}
\mathbb{P}(\tilde{w}_{n+1} > x, \tilde{w}_n < K) &= \int_{0}^K \mathbb{P}(w + \sigma + v_{n+1} - T > x) \\
  & \qquad \times\lambda \sum_{j=0}^{n-1} \binom{n-1}{j}K^{n-1-j}\lambda^{n-1-j}e^{-\lambda(x+nT - (n-j)\sigma)}\\
                    &= \sum_{j=0}^{n-1} \binom{n-1}{j}K^{n-j}\lambda^{n-j}e^{-\lambda(x + (n+1)T - (n+1-j)\sigma)}.
\end{split}
\end{equation*}

\noindent
consequently, $\mathbb{P}(\tilde{w}_{n+1} > x, \tilde{w}_n  \geq K) = \sum_{j=0}^{n-1} \binom{n-1}{j}K^{n-1-j}\lambda^{n-1-j}e^{-\lambda(x+(n+1)T - (n-j)\sigma)}$. Hence,

\begin{equation*}
\begin{split}
\mathbb{P}(\tilde{w}_{n+1} > x) &= \sum_{j=0}^{n-1} \binom{n-1}{j}K^{n-j}\lambda^{n-j}e^{-\lambda(x + (n+1)T - (n+1-j)\sigma)} \\
                                & \qquad + \sum_{j=0}^{n-1} \binom{n-1}{j}K^{n-1-j}\lambda^{n-1-j}e^{-\lambda(x+(n+1)T - (n-j)\sigma)} \\
                                & =\binom{n-1}{0}K^n \lambda^n + e^{-\lambda(x+(n+1)T - (n+1)\sigma)} \\
                         & \qquad + \sum_{j=1}^{n-1} \binom{n-1}{j}K^{n-j}\lambda^{n-j}e^{-\lambda(x + (n+1)T - (n+1-j)\sigma)}\\
                                & \qquad + \sum_{j=0}^{n-2} \binom{n-1}{j}K^{n-1-j}\lambda^{n-1-j}e^{-\lambda(x+(n+1)T - (n-j)\sigma)}\\
                              & \qquad + \binom{n-1}{n-1}e^{-\lambda (x+(n+1)T - \sigma)}\\
                                &=\sum_{j=0}^n \binom{n}{j}K^{n-j}\lambda^{n-j}e^{-\lambda(x+(n+1)T - (n+1-j)\sigma)}.
\end{split}
\end{equation*}

\begin{equation*}
\begin{split}
\mathbb{P}(\tilde{w}_{n+1} > x) &= \sum_{j=0}^{n-1} \binom{n-1}{j}K^{n-j}\lambda^{n-j}e^{-\lambda(x + (n+1)T - (n+1-j)\sigma)} \\
                                & \qquad + \sum_{j=0}^{n-1} \binom{n-1}{j}K^{n-1-j}\lambda^{n-1-j}e^{-\lambda(x+(n+1)T - (n-j)\sigma)} \\
                                &=\sum_{j=0}^n \binom{n}{j}K^{n-j}\lambda^{n-j}e^{-\lambda(x+(n+1)T - (n+1-j)\sigma)}.
\end{split}
\end{equation*}

\end{proof}

\section{The case of exponentially distributed service times and vacation times}

In this section, from \eqref{equation7} we obtain differential equation for the unknown pdf $f$ and solve it explicitly using the Laplace transform as in the previous section. Throughout this section, we assume that $B(x) = 1 - e^{-\mu x}$ and $V(x) = 1 - e^{-\lambda x}$, $x \geq 0$ with $\lambda >0$, $\mu > 0$ and $\lambda \neq \mu$. Let $G(x)$ be the distribution function of the random variable $\sigma_0 + v_1$, that is for $ x \geq 0$
\begin{equation*}
G(x) = 1 - \mu/(\mu - \lambda)e^{-\lambda x} -  \lambda/(\lambda -  \mu)e^{-\mu x}.
\end{equation*}

\noindent
Equation \eqref{equation8} becomes 
\begin{equation*}
\begin{split}
\sum_{n \geq 0} G^x_n(w) &= \mathbb{P}(\sigma + v \leq x -w +T)\\
& \quad +\sum_{n \geq 1}\mathbb{P}(K -w + nT \leq \sigma + v \leq x -w +(n+1)T) \\
   &= 1 -  \dfrac{\mu}{\mu - \lambda}e^{-\lambda(x - w + T)} - \dfrac{\lambda}{\lambda -  \mu}e^{-\mu (x - w + T)} + \dfrac{\mu}{\mu - \lambda} \alpha_{\lambda}  \left[e^{-\lambda(K - w)} - e^{-\lambda(x - w + T)}  \right]  \\
     & \qquad + \dfrac{\lambda}{\lambda -  \mu} \alpha_{\mu} \left[e^{-\mu(K - w)} - e^{-\mu(x - w + T)}\right] ,
\end{split}   
\end{equation*}

\noindent
where
\[ \alpha_{\lambda} = e^{-\lambda T}/(1 - e^{-\lambda T})  \qquad \mbox{and} \qquad \alpha_{\mu} = e^{-\mu T}/(1 - e^{-\mu T} ). \]

\noindent
We now derive the stationary waiting time integral equation. Substituting the above equation in \eqref{equation7}q, yields for $0 \leq x < K$,
\begin{equation}
\label{equation11}
\begin{split}
 W(x) &= W(0)\sum_{n \geq 0} G^x_n(0) + \int_{0^+}^{K-0} \sum_{n \geq 0} G^x_n(w)dW(w)\\
      &= W(0) \left\{ 1 - \dfrac{\mu}{\mu - \lambda} e^{-\lambda(x+T)} - \dfrac{\lambda}{\lambda - \mu}e^{-\mu(x+T)} 
       + \dfrac{\mu}{\mu - \lambda} \alpha_{\lambda} \left( e^{-\lambda K} - e^{-\lambda (x+T)} \right) \right. \\
      & \qquad \qquad \; \left. +  \dfrac{\lambda}{\lambda - \mu} \alpha_{\mu} \left( e^{-\mu K} - e^{-\mu (x+T)} \right) \right\} \\ 
      & \qquad + \int_{0^+}^{K-0}\left[ 1 - \dfrac{\mu}{\mu - \lambda} e^{-\lambda(x - w + T)} - \dfrac{\lambda}{\lambda - \mu}e^{-\mu(x - w + T)} \right] dW(w) \\
      & \qquad + \dfrac{\mu}{\mu - \lambda} \alpha_{\lambda} \int_{0^+}^{K-0}\left[ e^{-\lambda(K-w)} - e^{-\lambda (x-w+T)} \right]dW(w)\\
      & \qquad + \dfrac{\lambda}{\lambda - \mu} \alpha_{\mu}\int_{0^+}^{K-0}\left[ e^{-\mu (K-w)} - e^{-\mu (x-w+T)} \right] dW(w). 
\end{split}      
\end{equation}
 
\noindent
Taking the derivative of \eqref{equation11} with respect to $x$ yields
\begin{equation}
\label{equation12}
\begin{split}
   f(x)   &= W(0) \left\lbrace \dfrac{\lambda \mu}{\mu - \lambda} \alpha_{\lambda} e^{-\lambda x}  + \dfrac{\lambda \mu}{\lambda - \mu} \alpha_{\mu} e^{-\mu x}\right\rbrace \\
     &\quad + \quad \dfrac{\lambda \mu}{\mu - \lambda} \alpha_{\lambda} \int_{0}^{x}  e^{-\lambda(x - w)} f(w)dw  + \dfrac{\lambda \mu}{\lambda - \mu} \alpha_{\mu}\int_{0}^{x}  e^{-\mu(x - w)} f(w)dw.
\end{split}      
\end{equation}

\noindent
Hereinafter, we transform \eqref{equation12}q into an $2$-nd order linear homogeneous differential equation. Taking the derivative with respect to $x$ to Equation~\eqref{equation12} yields
\begin{equation}
\label{equation13}
\begin{split}
f'(x) &= -\lambda \left\lbrace W(0) \dfrac{\lambda \mu}{\mu - \lambda}\alpha_{\lambda}e^{-\lambda x} + \dfrac{\lambda \mu}{\mu - \lambda} \alpha_{\lambda} \int_{0}^x e^{-\lambda (x-w)}f(w)dw \right\rbrace \\
  & \quad  -\mu \left\lbrace W(0) \dfrac{\lambda \mu}{\lambda - \mu}\alpha_{\mu}e^{-\mu x} + \dfrac{\lambda \mu}{\lambda - \mu} \alpha_{\mu} \int_{0}^x e^{-\mu (x-w)}f(w)dw \right\rbrace\\
    & \quad + \dfrac{\lambda \mu}{\lambda - \mu}\alpha_{\mu} f(x)+ \dfrac{\lambda \mu}{\mu - \lambda}\alpha_{\lambda} f(x).\\
\end{split}
\end{equation}

\noindent
Equation (\ref{equation13}) can be rewritten as
\begin{equation*}
\begin{split}
f'(x) &=  \left( -\lambda - \mu + \dfrac{\lambda \mu}{\mu - \lambda}\alpha_{\lambda} + \dfrac{\lambda \mu}{\lambda - \mu}\alpha_{\mu} \right)f(x)\\
  & \qquad+ \lambda \left\lbrace W(0) \dfrac{\lambda \mu}{\lambda - \mu}\alpha_{\mu}e^{-\mu x} + \dfrac{\lambda \mu}{\lambda - \mu} \alpha_{\mu} \int_{0}^x e^{-\mu (x-w)}f(w)dw \right\rbrace\\
      & \qquad + \mu \left\lbrace W(0) \dfrac{\lambda \mu}{\mu - \lambda}\alpha_{\lambda}e^{-\lambda x} + \dfrac{\lambda \mu}{\mu - \lambda} \alpha_{\lambda} \int_{0}^x e^{-\lambda (x-w)}f(w)dw \right\rbrace.
\end{split}
\end{equation*}

\noindent
Differentiating the above equation leads to the following $2$-nd order homogeneous linear differential equation
\begin{equation}
\label{equation14}
f''(x) + A_{\lambda,\mu}f'(x) + B_{\lambda,\mu}f(x) = 0,
\end{equation}

\noindent
where
\[ A_{\lambda,\mu} = \lambda + \mu - \dfrac{\lambda \mu}{\mu - \lambda}\alpha_{\lambda} - \dfrac{\lambda \mu}{\lambda - \mu}\alpha_{\mu} \qquad \mbox{and}\qquad B_{\lambda,\mu} = \lambda \mu - \dfrac{\lambda^2 \mu}{\lambda - \mu} \alpha_{\mu} - \dfrac{\lambda \mu^2}{\mu - \lambda}\alpha_{\lambda}. \]

\noindent
To solve (\ref{equation14}), we will apply the Laplace transform. The density $f$ has a bounded support. In order to inverse the Laplace transform, we shall introduce a function $g$ with support $(0, \infty)$. Supppose that there exists a function $g$ which coincides with $f$ on $(0,K)$, so that it satisfies for all $x > 0$,
\begin{equation*}
\begin{split}
   g(x)   &= G(0) \left\lbrace \dfrac{\lambda \mu}{\mu - \lambda} \alpha_{\lambda} e^{-\lambda x} + \dfrac{\lambda \mu}{\lambda - \mu} \alpha_{\mu} e^{-\mu x} \right\rbrace
             + \dfrac{\lambda \mu}{\mu - \lambda} \alpha_{\lambda} \int_{0}^{x}  e^{-\lambda(x - w)} g(w)dw \\
      & \qquad + \dfrac{\lambda \mu}{\lambda - \mu} \alpha_{\mu}\int_{0}^{x}  e^{-\mu(x - w)} g(w)dw,
\end{split}      
\end{equation*}

\noindent
where $G$ denotes the probability function of $g$, and
\begin{equation}
\label{equation15}
g''(x) + A_{\lambda,\mu}g'(x) + B_{\lambda,\mu}g(x) = 0.
\end{equation}

\noindent
Assume futhermore, that $W(0) = G(0)$. Let $\Phi$ be the Laplace transform of $g$, 
\begin{equation*}
\Phi(\theta) = \int_{0}^{\infty} e^{- \theta x} g(x) dx, \qquad x \in \mathbb{R}^+, \quad \theta \in \mathbb{C}, \quad \mbox{Re}(\theta) \geq 0.
\end{equation*}

\noindent
By taking the Laplace transform of (\ref{equation15}), we obtain
\begin{equation*}
(\theta^2 + A_{\lambda, \mu}\theta + B_{\lambda,\mu})\Phi(\theta) = (A_{\lambda,\mu} + \theta)g(0) + g'(0).
\end{equation*}

\noindent
Rearranging the terms gives
\begin{equation}
 \label{racine gamma}
\dfrac{\Phi(\theta)}{G(0)} = \dfrac{\theta \lambda \mu(\alpha_{\lambda} - \alpha_{\mu}) + \lambda \mu (\mu \alpha_{\lambda} - \lambda \alpha_{\mu}) }{\theta^2 (\mu - \lambda) + \theta \left[ (\mu ^2 - \lambda ^2) + \lambda \mu (\alpha_{\mu} - \alpha_{\lambda}) \right] + \lambda \mu  \left[ \mu (1 - \alpha_{\lambda}) - \lambda (1 - \alpha_{\mu})\right]}.
\end{equation}

\noindent
The denominator of $\dfrac{\Phi(\theta)}{G(0)}$ is clearly polynomial of degree two. Its roots will be denoted $\gamma_1$ and $\gamma_2$ and have negative real parts. Furthermore, the derivative has one zero which is not a root of the denominator, thus $\gamma_i$ are simple roots ($i=1,2$). The numerator is polynomial of degree one, and we have the following partial fraction expansion
\begin{equation}
\label{equation16}
\dfrac{\Phi(\theta)}{G(0)} = \sum_{i=1}^2 \dfrac{C_i}{\theta - \gamma_i}.
\end{equation}

\noindent
The constants $C_i$ ($i = 1,2$), are expressed by
\begin{eqnarray}
\label{constante C1}
C_1 = \lim_{\theta \rightarrow \gamma_1} \dfrac{\Phi(\theta)}{G(0)}(\theta - \gamma_1) = \dfrac{\gamma_1 \lambda \mu(\alpha_{\lambda} - \alpha_{\mu}) + \lambda \mu(\mu \alpha_{\lambda} - \lambda \alpha_{\mu}) }{\gamma_1 - \gamma_2},
\end{eqnarray}

\noindent
and
\begin{eqnarray}
\label{constante C2}
C_2 = \lim_{\theta \rightarrow \gamma_2} \dfrac{\Phi(\theta)}{G(0)}(\theta - \gamma_2) = \dfrac{\gamma_2 \lambda \mu(\alpha_{\lambda} - \alpha_{\mu}) + \lambda \mu(\mu \alpha_{\lambda} - \lambda \alpha_{\mu}) }{\gamma_2 - \gamma_1}.
\end{eqnarray}

\noindent
Inverting (\ref{equation16}) yields
\begin{eqnarray*}
g(x) = G(0)\sum_{i=1}^2 C_i e^{\gamma_i x}, \qquad x > 0 .
\end{eqnarray*}

\noindent
Identifying $f(x)$ with $g(x)$ on $0 < x < K$, gives
\begin{eqnarray*}
f(x) = W(0)\sum_{i=1}^2 C_i e^{\gamma_i x}.
\end{eqnarray*}

\noindent
It remains to find the constant $W(0)$ which is done by the normalizing condition
\begin{eqnarray*}
W(0) + \int_{0}^K f(x) dx + B_K = 1.
\end{eqnarray*}

\noindent
Therefore,
\begin{eqnarray}
\label{equation17}
W(0) = \left[ 1 - B_K \right]\left[  1 + \int_{0}^K \sum_{i=1}^2 C_i e^{\gamma_i x} dx \right]^{-1}.
\end{eqnarray}

\noindent
The calculation of the probability of rejection comes from to (\ref{equation9}), we have
\begin{equation*}
\begin{split}
B_K &= W(0) \left\lbrace \dfrac{\mu}{\mu - \lambda}\alpha_{\lambda}e^{-\lambda K} + \dfrac{\lambda}{\lambda - \mu}\alpha_{\mu} e^{-\mu K} \right\rbrace \\
    & \qquad + \dfrac{\mu}{\mu - \lambda}\alpha_{\lambda}\int_{0}^K e^{\lambda w} f(w) dw + \dfrac{\lambda}{\lambda - \mu}\alpha_{\mu} e^{-\mu K} \int_{0}^K e^{\mu w} f(w) dw.
\end{split}    
\end{equation*}

\noindent
The calculation of integrals in the last equality gives
\begin{eqnarray*}
\int_{0}^K e^{\lambda w} f(w) dw = \dfrac{W(0)C_1}{\lambda + \gamma_1}\left[ e^{(\lambda + \gamma_1)K} - 1 \right] + \dfrac{W(0)C_2}{\lambda + \gamma_2}\left[ e^{(\lambda + \gamma_2)K} - 1 \right],
\end{eqnarray*} 

\begin{eqnarray*}
\int_{0}^K e^{\mu w} f(w) dw = \dfrac{W(0)C_1}{\mu + \gamma_1}\left[ e^{(\mu + \gamma_1)K} - 1 \right] + \dfrac{W(0)C_2}{\mu + \gamma_2}\left[ e^{(\mu + \gamma_2)K} - 1 \right].
\end{eqnarray*} 

\noindent
Finally,
\begin{equation}
\label{equation18}
\begin{split}
B_K &= W(0)  \dfrac{\mu}{\mu - \lambda}\alpha_{\lambda} \left\lbrace e^{-\lambda K} + \dfrac{C_1}{\lambda + \gamma_1}\left[ e^{(\lambda + \gamma_1)K} - 1 \right] + \dfrac{C_2}{\lambda + \gamma_2}\left[ e^{(\lambda + \gamma_2)K} - 1 \right]\right\rbrace \\
    & + W(0)\dfrac{\lambda}{\lambda - \mu}\alpha_{\mu} \left\lbrace e^{-\mu K} + \dfrac{C_1}{\mu + \gamma_1}\left[ e^{(\mu + \gamma_1)K} - 1 \right] + \dfrac{C_2}{\mu + \gamma_2}\left[ e^{(\mu + \gamma_2)K} - 1 \right]\right\rbrace .
\end{split}           
\end{equation}

\begin{rem}
Equation (\ref{equation14}) may be solved using the characteristic equation $t^2 + A_{\lambda, \mu}t + B_{\lambda, \mu} = 0$. The solution of (\ref{equation14}) is of the form $f(x) = C_1 e^{t_1 x} + C_2 e^{t_2 x}$, where $t_1 \neq t_2$ are the roots of the characteristic equation and $C_1$, $C_2$ are calculated from initial or boundary conditions.
\end{rem}

\noindent
These conclusions are summarized in the following Proposition.

\begin{prp}
In the $D/M/1$ queue with exponential vacation times and single service discipline, under conditions of Proposition~\eqref{thm1}, the pdf of $W$ on $(0,K)$ is given by
\begin{equation*}
f(x) = W(0)\sum_{i=1}^2 C_i e^{\gamma_i x},
\end{equation*}

\noindent
where $W(0)$ and $B_K$ are given by \eqref{equation17} and \eqref{equation18} respectively, the constants $C_1$ and $C_{2}$ are given by \eqref{constante C1} and \eqref{constante C2}, and $\gamma_{i}$, $i=1,2$ are the roots of \eqref{racine gamma}. 

\end{prp}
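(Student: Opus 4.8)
The plan is to turn the derivation carried out in the running text of Section~6 into a self-contained proof. First I would record that the stability hypothesis of Corollary~\ref{thm1} is automatically met here: since $\sigma_0+v_1$ has an everywhere-positive density on $\mathbb{R}^+$, one has $\mathbb{P}(\sigma_0+v_1<T)>0$ for every $T>0$, so the limiting distribution $W$ exists and satisfies the fixed-point equation \eqref{equation7}. Substituting the convolution distribution $G(x)=1-\tfrac{\mu}{\mu-\lambda}e^{-\lambda x}-\tfrac{\lambda}{\lambda-\mu}e^{-\mu x}$ into \eqref{equation8} and summing the two geometric series yields the closed form of $\sum_{n\ge0}G^x_n(w)$ displayed before \eqref{equation11}, and inserting it into \eqref{equation7} gives the integral equation \eqref{equation11} satisfied by $W$ on $[0,K)$.

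Second, I would show that $W$ restricted to $(0,K)$ is absolutely continuous. As in the deterministic case of Section~5, the right-hand side of \eqref{equation11} is a finite sum of a constant times $\int_{0^-}^{K-0}e^{-\theta(x-w)}\,dW(w)$ with $\theta\in\{\lambda,\mu\}$; forming the difference quotient $\bigl(W(x+h)-W(x)\bigr)/h$, the integrands are dominated by a constant on compact subsets of $(0,K)$, so Lebesgue's dominated convergence theorem shows the derivative exists and is bounded there, with atoms only at $0$ and $K$. Writing $f$ for this density, one differentiation of \eqref{equation11} gives \eqref{equation12}; differentiating again and using \eqref{equation12} itself to re-express the exponential–integral terms produces a first-order integro-differential identity, and one further differentiation eliminates every integral and yields the homogeneous linear ODE \eqref{equation14} with the stated coefficients $A_{\lambda,\mu}$ and $B_{\lambda,\mu}$.

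Third — the substantive step — I would introduce the auxiliary function $g$ on $(0,\infty)$ solving the same renewal-type integral equation with $G(0)$ in place of $W(0)$ and with the convention $W(0)=G(0)$; by construction $g$ agrees with $f$ on $(0,K)$, satisfies \eqref{equation15}, and has initial data $g(0),g'(0)$ obtained by evaluating \eqref{equation12} and its first derivative at $x=0^+$. Taking Laplace transforms turns \eqref{equation15} into $(\theta^2+A_{\lambda,\mu}\theta+B_{\lambda,\mu})\Phi(\theta)=(A_{\lambda,\mu}+\theta)g(0)+g'(0)$, which after simplification is the rational expression \eqref{racine gamma}. I would then check that the denominator is a genuine quadratic (leading coefficient $\mu-\lambda\neq0$ by hypothesis), that its roots $\gamma_1,\gamma_2$ are simple — the derivative of the denominator has a unique zero that is not itself a root, as noted after \eqref{equation16} — and that $\mathrm{Re}\,\gamma_i<0$, so that the partial fraction decomposition \eqref{equation16} with residues \eqref{constante C1}–\eqref{constante C2} is legitimate and term-by-term inversion gives $g(x)=G(0)\sum_{i=1}^2 C_ie^{\gamma_ix}$.

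Finally I would identify $f$ with $g$ on $(0,K)$: both solve the second-order linear ODE \eqref{equation14} there with the same value and derivative at $0^+$, so the uniqueness theorem for linear ODEs forces $f(x)=W(0)\sum_{i=1}^2 C_ie^{\gamma_ix}$ on $(0,K)$. The constant $W(0)$ is then determined by the normalization $W(0)+\int_0^K f(x)\,dx+B_K=1$, which rearranges to \eqref{equation17}, and the rejection probability $B_K$ is obtained by substituting this $f$ together with $V(x)=1-e^{-\lambda x}$ into \eqref{equation9} and evaluating the elementary integrals $\int_0^K e^{\lambda w}f(w)\,dw$ and $\int_0^K e^{\mu w}f(w)\,dw$, giving \eqref{equation18}. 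I expect the main obstacle to be the root analysis of the denominator in \eqref{racine gamma} — establishing simplicity of $\gamma_1,\gamma_2$ and especially that $\mathrm{Re}\,\gamma_i<0$, since this is exactly what makes the auxiliary function $g$ well defined and the Laplace inversion rigorous; the remaining steps are routine differentiation, ODE uniqueness, and bookkeeping.
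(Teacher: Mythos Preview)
Your proposal is correct and follows essentially the same route as the paper: substitute the exponential $G$ into \eqref{equation7} to obtain \eqref{equation11}, differentiate to the Volterra equation \eqref{equation12} and then to the second-order ODE \eqref{equation14}, introduce the auxiliary $g$ on $(0,\infty)$, solve via the Laplace transform and partial fractions \eqref{equation16}--\eqref{constante C2}, identify $f$ with $g$ on $(0,K)$, and normalize via \eqref{equation17}--\eqref{equation18}. You add welcome rigor the paper leaves implicit --- the absolute-continuity argument, the explicit appeal to ODE uniqueness for the identification $f\equiv g$, and the flag that the sign and simplicity of the $\gamma_i$ need checking --- but the architecture is the same.
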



\begin{thebibliography}{10}

%
%
%


\bibitem{AltmanYechiali2006}
E. Altman and U. Yechiali,
\textit{Analysis of customers' impatience in queues with server vacations},
Queueing Systems. Theory and Applications
\textbf{52}
(2006),
no.~4,
261--279.
     

\bibitem{Asmussen}
S. Asmussen,
Applied probability and queues
Applications of Mathematics (New York)
\textbf{51},
Second Edition,
Springer-Verlag, New York,
2003.
 

\bibitem{Baccelli84},
Bacelli, F. and P.  Boyer, and G. H{\'e}buterne,
\textit{Single-server queues with impatient customers},
Advances in Applied Probability
\textbf{16}
(1984),
no~4,
887--905.
     
\bibitem{Daley65},
D.J. Daley,
\textit{General customer impatience in the queue {$GI/G/1$}}
Journal of Applied Probability,
\textbf{2}
(1965)
186--205

\bibitem{FellerI}
W. Feller
\textit{An introduction to probability theory and its applications. {V}ol. {I}},
Third edition,
 John Wiley \& Sons, Inc., New York-London-Sydney,
(1968).


\bibitem{Finch1960}
P. D. Finch,
\textit{Deterministic customer impatience in the queueing system {$GI/M/1$}},
Biometrika
\textbf{47}
1960
45--52.
     
  


\bibitem{Finch1961}
P. D. Finch,
\textit{Deterministic customer impatience in the queueing system {GI}/{M}/1; a correction},
Biometrika
\textbf{48}
(1961)
472--473.


\bibitem{Ghosal1963}
A. Ghosal,
\textit{Queues with finite waiting time}
Operations Research
 \textbf{11}
 (1963) 
 919--921.
   

\bibitem{Ghosal1970},
A.Ghosal, 
\textit{Some aspects of queueing and storage systems}
Lecture Notes in Operations Research and Mathematical Systems, Vol. 23,
Springer-Verlag, Berlin-New York,
1970

\bibitem{KatayamaTsuyoshi2011}
T. Katayama,
\textit{Some results for vacation systems with sojourn time limits},
 Journal of Applied Probability
 \textbf{48}
 (2011)
 679--687.  
     

\bibitem{KatayamaTsuyoshi2012},
T. Katayama,
\textit{A note on {M}/{G}/1 vacation systems with sojourn time limits},
Journal of Applied Probability
\textbf{49}
(2012)
1194--1199.
  
  
  
\bibitem{Lindley52}
 D. V. Lindley,
 \textit{The theory of queues with a single server}
Proc. Cambridge Philos Soc.,
\textbf{48}
(1952)
277--289.
    

\bibitem{LoeveI},
M. Lo{\`e}ve,
Probability theory. {I},
fourth edition,
Graduate Texts in Mathematics, Vol. 45,
Springer-Verlag, New York-Heidelberg,
(1977).
      



\bibitem{Stanford79}
R. E. Stanford,
\textit{Reneging phenomena in single channel queues}    
Mathematics of Operations Research,
\textbf{4}
(1979)
no~2
162--178.
 

\bibitem{TakineHasegawa90},
T. Takine, and T. Hasegawa,
\textit{A note on {$M/G/1$} vacation systems with waiting time limits}
Advances in Applied Probability
\textbf{22}
(1990)
no~2 
513--518.
  



\bibitem{vanderDuynSchouten78}
F. A. van der Duyn Schouten
\textit{An {$M/G/1$} queueing model with vacation times}
Zeitschrift f\"ur Operations Research. Serie A. Serie B
 \textbf{22}
 (1978)
 no~3
 A95--A105.
    


\bibitem{WardGlynn2005}
A. R. Ward and P.W. Glynn
\textit{A diffusion approximation for a {$GI/GI/1$} queue with balking or reneging}   
Queueing Systems. Theory and Applications
  \textbf{50}
  (2005)
371--400.  
    

	

\end{thebibliography}
\end{document}